\newcommand{\script}[1]{{\mathcal{#1}}}
\newcommand{\A}{\script{A}}
\newcommand{\B}{\script{B}}
\newcommand{\I}{\script{I}}
\newcommand{\Y}{\Zme^I}
\newcommand{\frah}{\mathfrak{H}}
\newcommand{\Zme}{\mathcal{Z}}
\newcommand{\E}{\mathcal{E}}
\newcommand{\Hil}{\mathcal{H}}
\newcommand{\ip}[2]{\left( \left. #1 \, \right| \, #2 \right)}
\newcommand{\hip}[2]{\langle \! \langle #1, #2 \rangle \! \rangle}
\newcommand{\norm}[1]{{\left\| #1 \right\|}}
\newcommand{\supp}{\operatorname{supp}}
\newcommand{\id}{{\rm{id}}}
\newcommand{\spa}{\operatorname{span}}
\newcommand{\image}{\operatorname{im}}
\newcommand{\go}{{G^{\scriptscriptstyle{(0)}}}}
\newcommand{\gtwo}{G^{\scriptscriptstyle{(2)}}}
\newcommand{\ho}{{H^{\scriptscriptstyle{(0)}}}}
\newcommand{\lo}{L^{\scriptscriptstyle{(0)}}}
\newcommand{\ltwo}{L^{\scriptscriptstyle{(2)}}}
\DeclareMathOperator{\Ind}{Ind}
\newcommand{\indhpi}{{\rm{Ind}}_{\ho}^H \pi}
\newcommand{\indhpitilde}{{\rm{Ind}}_{\ho}^H \tilde{\pi}}
\newcommand{\Zop}{Z^{\text{op}}}
\newcommand{\gcra}{\Gamma_c(G, r^*\A)}
\newcommand{\acrossh}{\A \rtimes_{\alpha, r} H}
\newcommand{\rhoacrossgh}{\rho^*\A \rtimes_{\sigma, r} X^H}
\newcommand{\icrossh}{\I \rtimes_{\alpha\vert_I, r} H}
\newcommand{\rhoicrossgh}{\rho^*\I \rtimes_{\sigma\vert_{\rho^*I}, r} X^H}
\newcommand{\amodicrossh}{\A/\I \rtimes_{\alpha^I, r} H}
\newcommand{\rhoamodicrossgh}{\rho^*(\A/\I) \rtimes_{\sigma^{\rho^*I}, r} X^H}
\newcommand{\fullicrossh}{\I \rtimes_{\alpha\vert_I} H}
\newcommand{\fullrhoicrossgh}{\rho^*\I \rtimes_{\sigma\vert_{\rho^*I}} X^H}
\newcommand{\EI}{J^\rho}
\newcommand{\BI}{J}
\newcommand{\J}{K}
\newcommand{\Jtwiddle}{K^\rho}
\newtheorem{prop}{Proposition}[section]
\newtheorem{thm}[prop]{Theorem}
\newtheorem{cor}[prop]{Corollary}
\newtheorem{lem}[prop]{Lemma}
\theoremstyle{definition}
\newtheorem{defn}[prop]{Definition}
\newtheorem{exmp}[prop]{Example}
\newtheorem{rem}[prop]{Remark}
\newlist{thmenum}{enumerate}{10}
\setlist[thmenum,1]{label=\textnormal{(\alph*)}}
\setlist[thmenum,2]{label=\textnormal{(\roman*)}}
\newlist{altenum}{enumerate}{10}
\setlist[altenum,1]{label=\textnormal{(\roman*)}}
\setlist[altenum,2]{label=\textnormal{(\alph*)}}
\title{Equivalence and Exact Groupoids}
\author{Scott M. LaLonde}
\address{Department of Mathematics, The University of Texas at Tyler, Tyler, TX, 75799}
\email{slalonde@uttyler.edu}
\keywords{Exact groupoid, groupoid equivalence, groupoid crossed product, linking groupoid, exact $C^*$-algebra.}
\subjclass[2010]{46L55, 46L05}
\begin{document}

\begin{abstract}
	Given two locally compact Hausdorff groupoids $G$ and $H$ and a $(G,H)$-equivalence $Z$, one can construct the associated linking 
	groupoid $L$. This is reminiscent of the linking algebra for Morita equivalent $C^*$-algebras. Indeed, Sims and Williams reestablished 
	Renault's equivalence theorem by realizing $C^*(L)$ as the linking algebra for $C^*(G)$ and $C^*(H)$. Since the 
	proof that Morita equivalence preserves exactness for $C^*$-algebras depends on the linking algebra, the linking groupoid should serve the 
	same purpose for groupoid exactness and equivalence. We exhibit such a proof here.
\end{abstract}

\maketitle

\section{Introduction}
\label{sec:intro}
The notion of equivalence for locally compact groupoids is a powerful tool that has many interesting implications for groupoid $C^*$-algebras.
The definition, originally developed by Renault, seems to have first appeared in print in \cite[Def. 2.1]{MRW}. In short, two groupoids $G$ 
and $H$ are \emph{equivalent} if there is a locally compact Hausdorff space $Z$ that admits suitable commuting left and right actions of $G$ and $H$, 
respectively. At a glance, this definition should remind one of Morita equivalence for $C^*$-algebras. Indeed, Renault proved 
in \cite[Cor. 5.4]{renault87} that the full groupoid $C^*$-algebras $C^*(G)$ and $C^*(H)$ are Morita equivalent via a completion of $C_c(Z)$. This 
result subsumes many classical results, including Green's symmetric imprimitivity theorem \cite[Cor. 4.11]{TFB2}. It has since been extended to groupoid crossed 
products \cite{mw08} and Fell bundle $C^*$-algebras \cite{muhly-williams}.

Since many $C^*$-algebraic properties (such as nuclearity and exactness) are preserved under Morita equivalence, it seems plausible that certain 
desirable properties of groupoids should be invariant under Renault equivalence. For example, it is already known 
\cite[Thms. 2.2.17 \& 3.2.16]{ananth-renault} that equivalence preserves amenability. This paper deals specifically with the property of exactness for 
groupoids, which directly generalizes Kirchberg and Wassermann's notion of exactness for groups \cite{kw99}. We show that this property is preserved
under equivalence by mimicking a purely $C^*$-algebraic argument. In \cite[Prop. A.10]{katsura}, Katsura showed that if $A$ is a $C^*$-algebra and 
$A_0 \subseteq A$ is a full, hereditary subalgebra, then $A_0$ is exact if and only if $A$ is. By considering the linking algebra, one can then show that 
Morita equivalence preserves exactness for $C^*$-algebras. An argument like this one works at the level of groupoids, but it requires a suitable analogue 
of the linking algebra.

If $G$ and $H$ are two locally compact Hausdorff groupoids and $Z$ is a $(G,H)$-equivalence, one can construct an object called the 
\emph{linking groupoid} of $G$ and $H$.  Its underlying set is the topological disjoint union $L = G \sqcup Z \sqcup \Zop \sqcup H$, and the 
groupoid operations restrict to the usual ones on $G$ and $H$. Hence it contains both $G$ and $H$ as closed subgroupoids. This groupoid is 
described in Muhly's notes \cite[Rmk. 5.35]{muhly}, though the author notes the unfortunate absence of a Haar system on $L$. In \cite{sims-williams2012}, Sims 
and Williams were able to equip $L$ with a Haar system, and they recovered Renault's equivalence theorem by realizing $C^*(L)$ as the 
appropriate linking algebra. (The Haar system on $L$ was also constructed independently by Paravicini in \cite{paravicini}.) Additionally,
Sims and Williams showed that $C_r^*(G)$ and $C_r^*(H)$ are Morita equivalent via the linking algebra $C_r^*(L)$. In a later paper 
\cite{sims-williams2013}, they proved similar equivalence theorems for full and reduced Fell bundle $C^*$-algebras.

Given its connection to the linking algebra of two groupoid $C^*$-algebras, the linking groupoid should provide the key to 
reproducing Katsura's result in the realm of groupoids. This paper is devoted to a proof in this vein, which appears in Section 4. We begin by laying out 
some preliminaries on equivalence and the linking groupoid in Section 2, and Section 3 is devoted to the proof a crucial technical result. We explore 
a short application of the main result in Section 5.

\section{Groupoids, Equivalence, and Exactness}
Let $G$ denote a locally compact Hausdorff groupoid. We write $\go$ for the unit space of $G$ and $\gtwo$ for the set of composable pairs, and
$r, s : G \to \go$ denote the range and source maps, respectively. We assume that all groupoids are second countable and carry 
continuous Haar systems unless otherwise specified. 

Recall that a locally compact Hausdorff space $Z$ is a \emph{(left) $G$-space} if there is a continuous open surjection $r_Z : Z \to \go$ and 
a continuous map $(\gamma, z) \mapsto \gamma \cdot z$ from $G {_s *}_{r_Z} Z$ to $Z$ satisfying:
\begin{enumerate}
	\item if $(\gamma, \eta) \in \gtwo$ and $(\eta, z) \in G * Z$, then $(\gamma \eta) \cdot z = \gamma \cdot (\eta \cdot z)$,
	\item $r_Z(z) \cdot z = z$ for all $z \in Z$.
\end{enumerate}
We say that the $G$-action is \emph{free} if $\gamma \cdot z = z$ implies $\gamma \in \go$, and it is \emph{proper} if the map $G * Z \to Z \times Z$
defined by $(\gamma, z) \mapsto (\gamma \cdot z, z)$ is proper. If $G$ acts both freely and properly on $Z$, we call $Z$ a \emph{principal}
$G$-space.

\begin{rem}
	The definition of a right $G$-action is analogous to the one given above, though the map $Z \to \go$ is usually denoted by $s_Z$. 
%	The notions of free, proper, and principal actions are similar for right actions. Also, w
	Also, when there is no chance of confusion we will suppress the 
	subscripts on the structure maps and simply write $r$ and $s$ in place of $r_Z$ and $s_Z$.
\end{rem}

\begin{defn}
\label{defn:gpoidequiv}
	Let $G$ and $H$ be second countable locally compact Hausdorff groupoids. A second countable locally compact Hausdorff space $Z$ 
	is called a \emph{$(G, H)$-equivalence}\index{groupoid equivalence} it if satisfies the following conditions:
	\begin{thmenum}
		\item $Z$ is a principal left $G$-space;
		\item $Z$ is a principal right $H$-space;
		\item the actions of $G$ and $H$ commute;
		\item the range map $r_Z : Z \to \go$ induces a homeomorphism $Z/H \cong \go$;
		\item the source map $s_Z : Z \to \ho$ induces a homeomorphism $G \backslash Z \cong \ho$.
	\end{thmenum}
\end{defn}

There are two examples that are particularly noteworthy. If $G$ is a locally compact Hausdorff groupoid, then $G$ is a $(G, G)$-equivalence. Also, a 
transitive groupoid is equivalent to any of its isotropy groups \cite[Ex. 2.2]{MRW}. Beyond these very special cases, it is not obvious that there are any interesting 
examples of groupoid equivalences. However, the following example shows that they are quite prevalent.
% since any principal groupoid space gives rise to an equivalence in a canonical way.

\begin{exmp}
	Let $H$ be a locally compact Hausdorff groupoid, and let $Z$ be a principal right $H$-space. Then $H$ acts diagonally on $Z *_s Z$,
	and we define $Z^H = (Z *_s Z)/H$. Two equivalence classes $[z, w]_H, [x, y]_H \in Z^H$ are composable when $[w] = [x]$ in $Z/H$,
	and we define
	\[
		[z, w]_H [w, y]_H = [z, y]_H \quad \text{ and } \quad [z, w]_H^{-1} = [w, z]_H.
	\]
	It can be checked that these operations make $Z^H$ into a locally compact Hausdorff groupoid, which is second countable whenever $Z$
	is \cite[Prop. 1.92]{geoff}. Furthermore, the range and source maps are given by
	\[
		r([z, w]_H) = [z, z]_H \quad \text{ and } \quad s([z, w]_H) = [w, w]_H,
	\]
	which allows us to identify the unit space of $Z^H$ with $Z/H$. The groupoid $Z^H$ is called the \emph{imprimitivity groupoid} associated
	to the $H$-space $Z$. It admits a natural action on $Z$, which makes $Z$ into a $(Z^H, H)$-equivalence. Thus any principal groupoid space
	gives rise to a groupoid equivalence in a canonical way. Indeed, this construction is prototypical: if $Z$ is a $(G,H)$-equivalence, then $Z^H$ and 
	$G$ are naturally isomorphic via the map
	\begin{equation}
	\label{eq:gzw}
		[z, w] \mapsto {_G[}z, w],
	\end{equation}
	where ${_G[}z, w] \in G$ is the unique element satisfying ${_G[}z, w] \cdot w = z$.
%	as topological groupoids. In this case we could also construct an imprimitivity groupoid ${^G Z}$ for the left $G$-action, and ${^G Z} \cong H$.
\end{exmp}

It is well-known that groupoid equivalence induces Morita equivalence of the associated groupoid $C^*$-algebras. Indeed, \cite[Cor. 5.4]{renault87}
guarantees that $C_c(Z)$ completes to a $C^*(G) - C^*(H)$-imprimitivity bimodule. Sims and Williams developed an alternative proof of Renault's
result using the linking groupoid \cite[Cor. 5.2]{sims-williams2012}. One advantage to their approach is that the proof descends nicely to the level of 
reduced groupoid $C^*$-algebras \cite[Thm. 4.1]{sims-williams2012}. Since it is a crucial tool in this paper, we outline the construction of the linking 
groupoid here.

Given a $(G, H)$-equivalence $Z$, define the \emph{opposite space} $\Zop = \{ \overline{z} : z \in Z \}$ to be a homeomorphic copy of $Z$, but
let $H$ and $G$ act on the left and right, respectively:
\[
	r(\overline{z}) = s(z), \quad s(\overline{z}) = r(z), \quad \eta \cdot \overline{z} = \overline{z \cdot \eta^{-1}}, \quad \overline{z} \cdot \gamma = 
		\overline{\gamma^{-1} \cdot z}
\]
for $\eta \in H$, $\gamma \in G$. It is straightforward to check that this makes $\Zop$ into an $(H, G)$-equivalence. We then define the 
\emph{linking groupoid} to be the topological disjoint union
\[
	L = G \sqcup Z \sqcup \Zop \sqcup H
\]
with unit space
\[
	\lo = \go \sqcup \ho.
\]
The set of composable pairs is
\[
	\ltwo = \bigl\{ (x, y) \in L \times L : s(x) = r(y) \bigr\},
\]
and one simply needs to specify the multiplication and inversion operations on $L$. These operations restrict to the usual ones on $G$ and $H$, and
the left $G$-action on $Z$ lets us define
\[
	\gamma z = \gamma \cdot z
\]
for $\gamma \in G$ and $z \in Z$. Similar products are defined using the actions of $G$ and $H$ on $Z$ and $\Zop$. If $(z, \overline{w}) \in \ltwo$ with 
$z \in Z$ and $\overline{w} \in \Zop$, then 
%$z, w \in Z$ with $s(z) = s(w)$ and 
we put
\[
	z \overline{w} = {_G [} z, w] \in G,
\]
where ${_G [} z, w]$ is defined as in \eqref{eq:gzw}. Similarly, we can define $\overline{z} w = [z, w]_H$. Finally, inversion is given on $Z$ (and hence on $\Zop$) by 
$z^{-1} = \overline{z}$. It is shown in \cite[Lem. 2.1]{sims-williams2012} that these operations make $L$ into a locally compact
Hausdorff groupoid, called the \emph{linking groupoid} of $G$ and $H$. 

%\begin{rem}
%	As mentioned above, Sims and Williams re-proved Renault's Equivalence Theorem by viewing $C^*(L)$ as the linking algebra for $C^*(G)$
%	and $C^*(H)$. A key observation in this proof is the fact that we can view elements of $C_c(L)$ as $2 \times 2$-matrices: given $f \in C_c(L)$, 
%	we can write
%	\[
%		f = \left( \begin{array}{cc}
%			f \vert_G & f \vert Z \\
%			f \vert_{\Zop} & f \vert_H
%		\end{array} \right).
%	\]
%\end{rem}

We have already mentioned the groupoid $C^*$-algebra $C^*(L)$, which only exists if $L$ is equipped with a Haar system. 
%(Recall that a Haar system on a groupoid $G$ is a continuously varying, left invariant family $\{\lambda^u\}_{u \in \go}$ of Radon measures supported on the 
%range fibers of $G$.) 
However, it is not obvious that if $G$ and $H$ have Haar systems $\{\lambda^u\}$ and $\{\beta^v\}$, respectively, then $L$ possesses a Haar system. 
Fortunately, this was established in \cite[Lem. 2.2]{sims-williams2012} (and independently in \cite{paravicini}). 
%By exploiting the fact that the range 
%map induces a homeomorphism between $Z/H$ and $\go$ and that each orbit $z \cdot H$ is homeomorphic to $H^{s(z)}$ by 
%\cite[Lemma 1.2]{sims-williams2012} we 
Given $u \in \go$, we can define a Radon measure $\sigma_Z^u$ on $Z$ by %for each $u \in \go$ by
\begin{equation}
\label{eq:ZFiberRadon}
	\sigma_Z^u(\varphi) = \int_H \varphi(z \cdot \eta) \, d\beta^{s(z)}(\eta)
\end{equation}
Here $z \in r^{-1}_Z(u)$, so $\supp(\sigma_Z^u) = z \cdot H = r_Z^{-1}(u)$. (Of course one needs to know that this definition is independent of 
the choice of $z$.) 
%and that the map $u \mapsto \int_Z \varphi(z) \, d\sigma_Z^u(z)$
%is continuous for all $\varphi \in C_c(Z)$. 
One can define a family of Radon measures $\{ \sigma_{\Zop}^v \}_{v \in \go}$ 
on $\Zop$ with $\supp \sigma_{\Zop}^v = r_{\Zop}^{-1}(v)$ in a similar fashion.
%: given $v \in \ho$, choose $\bar{z} \in r_{\Zop}^{-1}(v)$ and define
%\begin{equation}
%\label{eq:ZopFiberRadon}
%	\sigma_{\Zop}^v(\varphi) = \int_G \varphi(\bar{z} \cdot \gamma) \, d\lambda^{s(\bar{z})}(\gamma) = \int_G \varphi \bigl( \overline{\gamma^{-1} 
%		\cdot z} \bigr) \, d\lambda^{r(z)}(\gamma)
%\end{equation}
%for $\varphi \in C_c(\Zop)$. 
Now given $w \in L$, define
\[
		\kappa^w(F) = \begin{cases}
			\lambda^w(F \vert_G) + \sigma_Z^w(F \vert_Z) & \text{if } w \in \go \\
			\sigma_{\Zop}^w(F \vert_{\Zop}) + \beta^w(F \vert_H) & \text{if } w \in \ho.
		\end{cases}	
\]
Then \cite[Lem. 2.2]{sims-williams2012} states that the family $\{\kappa^w\}$ defines a Haar system on $L$.

%This paper deals primarily with the relationship between groupoid equivalence and exactness. The latter concept is defined in terms of groupoid
%dynamical systems and crossed products, so we record the necessary background results here.

We now turn to the concept of exactness for groupoids, which is defined in terms of groupoid dynamical systems and crossed products.

\begin{defn}
	Let $\A$ be an upper semicontinuous $C^*$-bundle over $\go$. An \emph{action} of $G$ on $\A$ is a family $\alpha = 
	\{ \alpha_\gamma\}_{\gamma \in G}$, where:
	\begin{thmenum}
		\item $\alpha_\gamma : \A_{s(\gamma)} \to \A_{r(\gamma)}$ is an isomorphism for all $\gamma \in G$,
		\item if $(\gamma, \eta) \in G^{(2)}$, then $\alpha_{\gamma\eta} = \alpha_\gamma \circ \alpha_\eta$, and
		\item the assignment $(\gamma, a) \mapsto \gamma \cdot a = \alpha_\gamma(a)$ is continuous from $G * \A \to \A$.
	\end{thmenum}
	If $\alpha$ is an action of $G$ on $\A$, the triple $(\A, G, \alpha)$ is called a \emph{groupoid dynamical system}. We say that 
	$(\A, G, \alpha)$ is \emph{separable} if $A$ is separable and $G$ is second countable.
\end{defn}

All dynamical systems in this paper are assumed to be separable.
If $(\A, G, \alpha)$ is a groupoid dynamical system, the space $\Gamma_c(G, r^*\A)$ of continuous compactly supported sections becomes 
a $*$-algebra with respect to the product
\[
	f*g(\gamma) = \int_G f(\eta) \alpha_\eta \bigl( g(\eta^{-1}\gamma) \bigr) \, d\lambda^{r(\gamma)}(\eta)
\]
and involution
\[
	f^*(\gamma) = \alpha_\gamma \bigl( f(\gamma^{-1})^* \bigr)
\]
We equip $\Gamma_c(G, r^*\A)$ with a norm as follows: for $f \in \Gamma_c(G, r^*\A)$, define
\[
	\norm{f} = \sup \norm{\pi(f)},
\]
where $\pi$ ranges over all $*$-representations of $\gcra$ on Hilbert space that are continuous in the inductive limit topology. (Recall that a 
net $\{f_i\}$ converges to $f$ in the inductive limit topology if $f_i \to f$ uniformly and the sets $\supp(f_i)$ are eventually contained in a fixed 
compact set $K$.) We call this norm the \emph{universal norm}, and the completion of $\Gamma_c(G, r^*\A)$ with respect to the universal norm 
is called the \emph{(full) crossed product of $\A$ by $G$}, denoted $\A \rtimes_\alpha G$.

Exactness for groupoids is defined in terms of the \emph{reduced} crossed product, which can be constructed from the full crossed product via
induced representations. These representations (and many of the proofs in this paper) rely on the notion of an {equivalence} between dynamical 
systems, which is formally defined in \cite[Def. 5.1]{mw08}. In short, an \emph{equivalence} between two groupoid dynamical systems $(\A, G, \alpha)$ and 
$(\B, H, \beta)$ is an upper semicontinuous Banach bundle $p_\E : \E \to Z$ over a $(G, H)$-equivalence $Z$ together with 
\begin{itemize}
	\item $\A_{r(z)}-\B_{s(z)}$-imprimitivity bimodule structures on each fiber $\E_x$;
	\item commuting, continuous actions of $G$ and $H$ on the left and right of $\E$.
\end{itemize}
Additionally, the maps induced by the bimodule actions and inner products are continuous, and the bundle map $p_\E : \E \to Z$ is $G$- and $H$-equivariant.
Moreover, the groupoid actions must be compatible with the imprimitivity bimodule structure, and the $G$- and $\B$-actions on $\E$ commute, as do the $H$- 
and $\A$-actions. This definition is exactly what is needed to extend Renault's equivalence theorem 
to groupoid crossed products. Indeed, it is shown in \cite[Thm. 5.5]{mw08} that if $(\A, G, \alpha)$ and $(\B, H, \beta)$ are separable groupoid dynamical 
systems and $p : \E \to Z$ is an equivalence between $(\A, G, \alpha)$ and $(\B, H, \beta)$, then $X_0 = \Gamma_c(X, \E)$ completes to a 
$\A \rtimes_\alpha G - \B \rtimes_\beta H$-imprimitivity bimodule.
%	\begin{align*}
%		{_{\A \rtimes_\alpha G} \hip{z}{w}}(\gamma) &= \int_H {_\A \bigl\langle} z(\gamma \cdot x \cdot \eta), \gamma \cdot w(x \cdot \eta) 
%			\bigr\rangle \, d\lambda_H^{s(x)}(\eta) \\
%		f \cdot z(x) &= \int_G f(\gamma) \cdot \bigl( \gamma \cdot z(\gamma^{-1} \cdot x) \bigr) \, d\lambda_H^{r(x)}(\gamma) \\
%		z \cdot g(x) &= \int_H \bigl(z(x \cdot \eta) \cdot \eta^{-1} \bigr) \cdot \beta_\eta \bigl( g(\eta^{-1}) \bigr) \, d\lambda_H^{s(x)}(\gamma) \\
%		\hip{w}{v}_{\B \rtimes_\beta H}(\eta) &= \int_G \big\langle w(\gamma^{-1} \cdot y \cdot \eta^{-1}), v(\gamma^{-1} \cdot y) \cdot 
%			\eta^{-1} \bigr\rangle_\B \, d\lambda_G^{r(y)}(\gamma)
%	\end{align*}
We will use this theorem in two particular special cases, which we describe below.

Our first example yields a ``Mackey machine'' for inducing representations of crossed products from 
closed subgroupoids. This construction is described in detail in Section 1 of \cite{goehle2010}. Let $(\A, G, \alpha)$ be a separable groupoid dynamical system, and let $H \subseteq G$ be a closed subgroupoid with Haar 
system. If we put $X = s^{-1}(\ho)$, then the natural right action of $H$ on $X$ is free and proper. Let $X^H$ denote the associated imprimitivity groupoid. 
%Recall that $(X^H)^{(0)}$ can be identified with $X/H$, and that $X^H$ is equivalent to $H$ via $X$. Moreover, 
It is noted in Section 2 of \cite{IonWil09} (and it follows from \cite[Prop. 5.2]{KMRW})
that $X^H$ can be equipped with a Haar system $\{\mu^{[\xi]}\}$, which is defined by
\[
	\int_{X^H} f \bigl( [\xi, \eta] \bigr) \, d\mu^{[\zeta]} \bigl([\xi, \eta] \bigr) = \int_G f \bigl( [\zeta, \eta] \bigr) \, d\lambda_{s(\zeta)}(\eta)
\]
for $f \in C_c(X^H)$. There is also a natural dynamical system associated to $X^H$. Consider the map $\rho : X/H \to \go$ given by
$\rho([\gamma]) = r(\gamma)$, and form the pullback bundle $\rho^*\A$ over $X/H$. Then $X^H$ acts on $\rho^*\A$ in a 
straightforward way: given $[\gamma, \eta] \in X^H$, define $\sigma_{[\gamma, \eta]} : \A_{r(\eta)} \to \A_{r(\gamma)}$ by
\[
	\sigma_{[\gamma, \eta]}(a) = \alpha_{\gamma \eta^{-1}}(a)
\]
for $a \in \A_{r(\eta)}$. It is shown in \cite[Prop. 2.3]{goehle2010} that $\sigma = \{\sigma_{[\gamma, \eta]}\}$ defines a continuous
action of $X^H$ on $\rho^*\A$.
%
%We have already seen that $X^H$ is equivalent to $H$ via $X$. Since 
On the other hand, the restriction $\A\vert_\ho$ is an upper semicontinuous $C^*$-bundle over $\ho$, so we have a dynamical system 
$(\A\vert_\ho, H, \alpha\vert_H)$. The dynamical systems $(\rho^*\A, X^H, \sigma)$ and $(\A\vert_\ho, H, \alpha\vert_H)$ are equivalent 
via $\E = (s^*\A)\vert_X$ by \cite[Prop. 6.3]{geoff}, yielding the following fact from \cite{goehle2010}.

\begin{prop}[{\cite[Prop. 2.4]{goehle2010}}]
\label{prop:imprimitivityDS}
	Let $(\A, G, \alpha)$ and $(\rho^*\A, X^H, \sigma)$ be as above. Then $\Zme_0 = \Gamma_c(X, s^*\A)$ is a $\rho^*\A 
	\rtimes_\sigma X^H - \A \vert_{\ho} \rtimes_{\alpha \vert_H} H$-pre-imprimitivity bimodule with respect to the following operations for 
	$f \in \Gamma_c(X^H, r^*(\rho^*\A))$, $g \in \Gamma_c(H, r^*\A)$, and $z, w \in \Zme_0$:
	\begin{align*}
		f \cdot z(\gamma) &= \int_G \alpha_\gamma^{-1} \bigl( f([\gamma, \eta]) \bigr) z(\eta) \, d\lambda_{s(\gamma)}(\eta) \\
		z \cdot g(\gamma) &= \int_H \alpha_\eta \bigl( z(\gamma \eta) g(\eta^{-1}) \bigr) \, d\lambda_H^{s(\gamma)}(\eta) \\
		\hip{z}{w}_{\A \rtimes H}(\eta) &= \int_G z(\xi \eta^{-1})^* \alpha_\eta \bigl( w(\xi) \bigr) \, d\lambda_{s(\eta)}(\xi) \\
		{_{\rho^*\A \rtimes X^H} \hip{z}{w}}([\gamma, \eta]) &= \int_H \alpha_{\gamma \xi} \bigl( z(\gamma \xi) w(\eta \xi)^* \bigr) \, 
			d\lambda_H^{s(\gamma)}(\xi)
	\end{align*}
	The completion $\Zme_H^G$ of $\Zme_0$ is a $\rho^*\A \rtimes_\sigma X^H - \A\vert_{\ho} \rtimes_{\alpha \vert_H} H$-imprimitivity 
	bimodule, and $\rho^*\A \rtimes_\sigma X^H$ and $\A \vert_\ho \rtimes_{\alpha \vert_H} H$ are Morita equivalent.
\end{prop}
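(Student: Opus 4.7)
The plan is to derive this proposition as a direct application of the equivalence theorem for groupoid crossed products, \cite[Thm. 5.5]{mw08}. The preceding discussion has done most of the setup: $X = s^{-1}(\ho)$ is a $(X^H, H)$-equivalence, both $X^H$ and $H$ carry compatible Haar systems, and \cite[Prop. 6.3]{geoff} provides an upper semicontinuous Banach bundle $\E = (s^*\A)\vert_X$ over $X$ that implements an equivalence between the dynamical systems $(\rho^*\A, X^H, \sigma)$ and $(\A\vert_\ho, H, \alpha\vert_H)$. Feeding this into \cite[Thm. 5.5]{mw08} immediately produces a $\rho^*\A \rtimes_\sigma X^H - \A\vert_\ho \rtimes_{\alpha\vert_H} H$-imprimitivity bimodule by completing $\Gamma_c(X, \E)$, and the canonical identification $\Gamma_c(X, \E) = \Gamma_c(X, s^*\A) = \Zme_0$ matches the space named in the statement. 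Morita equivalence of the two crossed products is then automatic.

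What remains is to verify that the abstract pre-imprimitivity bimodule structure supplied by \cite[Thm. 5.5]{mw08} specializes to the four displayed formulas. I would first spell out the fiberwise structure: each fiber $\E_x = \A_{s(x)}$ is an $\A_{r(x)} - \A_{s(x)}$-imprimitivity bimodule with the right $\A_{s(x)}$-action by multiplication, the left $\A_{r(x)}$-action transported through $\alpha_x^{-1}$, and the two inner products recovered from $\alpha_x$ applied (respectively not applied) to the appropriate product of one section with the adjoint of another. I would then record from \cite[Def. 5.1]{mw08} the compatible left $X^H$- and right $H$-actions on $\E$, which are built from $\sigma$ on the base together with $\alpha$ on fibers, and which intertwine the $\rho^*\A$- and $\A\vert_\ho$-bimodule structures. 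This is the content that \cite[Prop. 6.3]{geoff} certifies.

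With these ingredients fixed, each of the four formulas is obtained by substituting the fiberwise operations into the general templates from \cite[Thm. 5.5]{mw08} and integrating against the relevant Haar system. For instance, the expression for $f \cdot z$ comes out by writing down the general action of $\Gamma_c(X^H, r^*(\rho^*\A))$ on $\Gamma_c(X, \E)$ and then inserting the explicit action of $(\rho^*\A)_{[\gamma, \eta]} = \A_{r(\gamma)}$ on $\E_\eta = \A_{s(\eta)}$ via $\alpha_\gamma^{-1}$; the other three formulas fall out in the same way. I expect the main obstacle to be notational rather than conceptual: one must track the range, source, and bundle projection maps carefully enough at every stage to confirm that each integrand takes values in the intended fiber, so that all multiplications and inner products are defined. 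Once this bookkeeping is discharged, the formulas coincide on the nose with those of \cite[Prop. 2.4]{goehle2010}.
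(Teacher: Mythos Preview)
Your proposal is correct and matches the paper's own treatment: the paper does not give a standalone proof but simply cites the result from \cite[Prop.~2.4]{goehle2010}, noting immediately beforehand that the dynamical systems are equivalent via $\E = (s^*\A)\vert_X$ by \cite[Prop.~6.3]{geoff}, which is precisely the input to \cite[Thm.~5.5]{mw08} that you propose to invoke. Your sketch of how the four formulas arise by specializing the general templates in \cite[Thm.~5.5]{mw08} is exactly the bookkeeping one would expect, and there is nothing more to it.
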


The key to forming induced representations is the observation that $\A \rtimes_\alpha G$ acts nondegenerately on $\Zme_H^G$ via adjointable 
operators, with the action given by
\[
	f \cdot z(\gamma) = \int_G \alpha_\gamma^{-1} \bigl( f(\eta) \bigr) z(\eta^{-1}\gamma) \, d\lambda^{r(\gamma)}(\eta)
\]
for $f \in \Gamma_c(G, r^*\A)$ and $z \in \Zme_0$ \cite[Prop. 2.5]{goehle2010}. We can then use standard Rieffel induction techniques to construct
an induced representation $\Ind_H^G \pi$ of $\A \rtimes_\alpha G$ %We complete $\Zme_H^G \odot \Hil$ with respect to the inner product
%\[
%	\ip{z \otimes h}{w \otimes k} = \ip{\pi\bigl( \hip{w}{z}_{\A \rtimes H} \bigr) h}{k}
%\]
%to obtain a 
on the Hilbert space $\Zme_H^G \otimes \Hil$,
%Then there is an nondegenerate representation $\Ind_H^G \pi$ of $\A \rtimes_\alpha G$ 
%on $\Zme_H^G \otimes \Hil$ 
which is characterized on elementary tensors by
\[
	\Ind_H^G \pi(f)(z \otimes h) = f \cdot z \otimes h
\]
for $f \in \Gamma_c(G, r^*\A)$, $z \in \Zme_0$, and $h \in \Hil$.

\begin{exmp}
\label{exmp:RegularReps}
	We will implement the above discussion almost exclusively with $H = \go$. 
	Let $(\A, G, \alpha)$ be a separable groupoid dynamical system, and take $H = \go$. Then 
	In this case, $\A \vert_{\go}= \A$ and the action of $\go$ on $\A$ is trivial, so it is not hard to 
	check that $\A \rtimes \go = A$. Furthermore, $X = G$ and $X^H = G *_s G$. The bimodule $\Zme_{\go}^G$ for the induction process is a completion of $\Gamma_c(G, s^*\A)$ with respect to the
	following simplified operations: for $f \in \Gamma_c(G *_s G, r^*(\rho^*\A))$, $g \in \Gamma_c(\go, \A)$, and $z, w \in \Zme_0$,
	\begin{align*}
		f \cdot z(\gamma) &= \int_G \alpha_\gamma^{-1} \bigl( f(\gamma, \eta) \bigr) z(\eta) \, d\lambda_{s(\gamma)}(\eta) \\
		z \cdot g(\gamma) &= z(\gamma) g(s(\gamma)) \\
		\hip{z}{w}_A(u) &= \int_G z(\xi)^* w(\xi) \, d\lambda_u(\xi) \\
		{_{\rho^*\A \rtimes X^H} \hip{z}{w}}(\gamma, \eta) &= \alpha_\gamma \bigl( z(\gamma) w(\eta)^* \bigr).
	\end{align*}
	Now $\Ind_{\go}^G \pi$ acts on $\Zme \otimes_A \Hil$, 
%	which is the completion of $\Gamma_c(G, s^*\A) \odot \Hil$ with respect
%	to the inner product characterized by
%	\begin{align*}
%		\ip{f \otimes h}{g \otimes k} %&= \ip{ \pi \bigl( \hip{g}{f}_A \bigr)h}{k} \\
%%			&= \int_{\go} \ip{ \pi_u \bigl( \hip{g}{f}_A(u) \bigr) h(u)}{k(u)} \, d\mu(u) \\
%%			&= \int_{\go} \int_G \ip{ \pi_{s(\xi)} \bigl( g(\xi)^*f(\xi) \bigr) h(s(\xi))}{k(s(\xi)} \, d\lambda_u(\xi) \, d\mu(u) \\
%			&= \int_G \ip{ \pi_{s(\xi)} \bigl( g(\xi)^*f(\xi) \bigr) h(s(\xi))}{k(s(\xi)} \, d\nu^{-1}(\xi).
%	\end{align*}
%	The induced representation 
	and it takes the usual form guaranteed above: for $f \in \gcra$, $z \in \Gamma_c(G,
	s^*\A)$, and $h \in \Hil$, $\Ind_{\go}^G \pi(f)(z \otimes h) = f \cdot z \otimes h$, where
	\[
		f \cdot z(\gamma) = \int_G \alpha_\gamma^{-1} \bigl(f(\eta)\bigr) z(\eta^{-1}\gamma) \, d\lambda^{r(\gamma)}(\eta).
	\]
	We will often refer to representations induced from $\go$ as \emph{regular representations}.
If we take $\pi$ to be faithful, then we can define the \emph{reduced norm} on $\Gamma_c(G, r^*\A)$:
\[
	\norm{f}_r = \norm{\Ind \pi(f)}.
\]
The resulting completion is the \emph{reduced crossed product}, denoted by $\A \rtimes_{\alpha, r} G$. 
\end{exmp}

We are mainly interested in reduced crossed
products, and it is critical
%A crucial fact moving forward is 
that Proposition \ref{prop:imprimitivityDS} descends to the level of 
reduced crossed products. This follows as a special case of \cite[Thm. 14]{sims-williams2013}, which is a version of Renault's Equivalence Theorem for
Fell bundle $C^*$-algebras.

\begin{cor}
\label{cor:ReducedImprimitivityDS}
	The operations defined in Proposition \ref{prop:imprimitivityDS} also make $\Zme_0$ into a $\rho^*\A \rtimes_{\sigma,r} X^H - \A \vert_{\ho} 
	\rtimes_{\alpha \vert_H,r} H$-pre-imprimitivity bimodule. Consequently, $\Zme_0$ completes to a $\rho^*\A \rtimes_{\sigma,r} X^H - \A 
	\vert_{\ho} \rtimes_{\alpha \vert_H,r} H$-imprimitivity bimodule $\Zme_{H,r}^G$, and $\rho^*\A \rtimes_{\sigma, r} X^H$ and $\A \vert_\ho 
	\rtimes_{\alpha\vert_H, r} H$ are Morita equivalent. Moreover, the kernels of the canonical quotient maps $\rho^*\A \rtimes_{\sigma} X^H
	\to \rhoacrossgh$ and $\A \vert_\ho \rtimes_{\alpha\vert_H} H \to \acrossh$ are matched up under the Rieffel correspondence from
	Corollary \ref{prop:imprimitivityDS}, and $\Zme_{H,r}^G$ is the quotient of $\Zme_H^G$ by the corresponding closed sub-bimodule.
\end{cor}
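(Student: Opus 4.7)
The plan is to deduce both parts of the corollary from Theorem 14 of \cite{sims-williams2013}, the Fell-bundle version of the reduced Renault equivalence theorem. The dynamical-system equivalence $\E = (s^*\A)\vert_X$ underlying Proposition \ref{prop:imprimitivityDS} gives rise to an equivalence between the semidirect-product Fell bundles for $(\rho^*\A, X^H, \sigma)$ and $(\A\vert_\ho, H, \alpha\vert_H)$. Feeding this Fell bundle equivalence into the Sims--Williams reduced theorem immediately yields that $\Zme_0$ is a $\rhoacrossgh$--$\acrossh$ pre-imprimitivity bimodule whose inner products are the images of those in Proposition \ref{prop:imprimitivityDS} under the canonical full-to-reduced quotient maps $q_A : \rho^*\A \rtimes_\sigma X^H \to \rhoacrossgh$ and $q_B : \A\vert_\ho \rtimes_{\alpha\vert_H} H \to \acrossh$, and that $\Zme_0$ completes to the imprimitivity bimodule $\Zme_{H,r}^G$.

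For the kernel identification, the key observation is that since the reduced inner products arise from the full ones by composition with $q_A$ and $q_B$, the reduced norm on $\Zme_0$ is dominated by the full norm. Hence the identity on $\Zme_0$ extends continuously to a surjection $q : \Zme_H^G \to \Zme_{H,r}^G$ that intertwines each full action on $\Zme_H^G$ with the corresponding reduced action on $\Zme_{H,r}^G$ precomposed with its quotient map. Let $\W = \ker q$, a closed $(\rho^*\A \rtimes_\sigma X^H)$--$(\A\vert_\ho \rtimes_{\alpha\vert_H} H)$ sub-bimodule of $\Zme_H^G$. The standard Rieffel correspondence for imprimitivity bimodules pairs $\W$ with a unique matched pair of ideals $I \lhd \rho^*\A \rtimes_\sigma X^H$ and $J \lhd \A\vert_\ho \rtimes_{\alpha\vert_H} H$ satisfying $\W = I \cdot \Zme_H^G = \Zme_H^G \cdot J$, and makes $\Zme_H^G/\W$ into an imprimitivity bimodule between the quotients.

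To finish, I identify $I$ with $\ker q_A$ and, symmetrically, $J$ with $\ker q_B$. Under the canonical isomorphism $\Zme_H^G/\W \cong \Zme_{H,r}^G$, both $(\rho^*\A \rtimes_\sigma X^H)/I$ and $\rhoacrossgh$ act faithfully on this bimodule as left coefficient algebras, and the full action of $\rho^*\A \rtimes_\sigma X^H$ factors through each. Comparing the two factorisations shows that an element lies in $I$ precisely when $q_A$ sends it to zero, giving $I = \ker q_A$; the argument on the right gives $J = \ker q_B$. The main point requiring care is setting up the continuous extension $q$ and verifying that it respects both the bimodule actions and the inner-product structure; this hinges entirely on the fact that the full and reduced operations on the dense subspace $\Zme_0$ are described by the same formulas up to composition with $q_A$ and $q_B$, and once it is in place everything else follows formally from the Rieffel correspondence.
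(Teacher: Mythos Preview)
Your proposal is correct and follows the paper's approach: the paper simply states that the corollary is a special case of \cite[Thm.~14]{sims-williams2013}, and you invoke exactly that result for the pre-imprimitivity bimodule structure. Your explicit argument for the kernel-matching via the quotient map $q:\Zme_H^G\to\Zme_{H,r}^G$ and the Rieffel correspondence is a correct and natural unpacking of what the paper leaves entirely to the citation.
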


The reduced crossed product is defined more easily than the full one, but it can be poorly 
behaved at times. In particular, it is well known that reduced crossed products can fail to preserve short exact sequences, even for groups. As a
result, we have the following definition from Kirchberg and Wassermann: a locally compact group $G$ is \emph{exact} if whenever 
$(A, G, \alpha)$ is a dynamical system and $I$ is a $G$-invariant ideal in $A$, the sequence
\[
	\xymatrix{
		0 \ar[r] & I \rtimes_{\alpha \vert_I, r} G \ar[r] & A \rtimes_{\alpha, r} G \ar[r] & A/I \rtimes_{\alpha^I, r} G \ar[r] & 0 
	}
\]
of reduced crossed products is exact. This notion generalizes easily to groupoids.
Let $(\A, G, \alpha)$ be a separable groupoid dynamical system, and suppose $I$ is an ideal in $A$. Then $I$ and $A/I$ are both
$C_0(\go)$-algebras by \cite[\S 3.3]{lalonde2014}, and we denote the associated upper semicontinuous $C^*$-bundles by $\I$ and $\A/\I$, respectively.

\begin{defn}
	The ideal $I$ is \emph{$G$-invariant} if $\alpha_\gamma \bigl( \I_{s(\gamma)} \bigr) = \I_{r(\gamma)}$ for all $\gamma \in G$. %, we have
%	\[
%		\alpha_\gamma \bigl( \I_{s(\gamma)} \bigr) = \I_{r(\gamma)}.
%	\]
\end{defn}

Note that this definition implies that the restriction $\alpha \vert_I = \{ \alpha_\gamma \vert_{\I_{s(\gamma)}} \}_{\gamma \in G}$ 
yields an action of $G$ on $\I$. Furthermore, for each $\gamma \in G$ we get an isomorphism
\[
	\alpha_\gamma^I : (\A/\I)_{s(\gamma)} \to (\A/\I)_{r(\gamma)}.
\]
Under the natural identification of $(\A/\I)_u$ with $\A_u/\I_u$, this action is just %given by
\[
	\alpha_\gamma^I \bigl( a(s(\gamma)) + \I_{s(\gamma)} \bigr) = \alpha_\gamma \bigl( a(s(\gamma)) \bigr) + \I_{r(\gamma)}.
\]
In particular, this says that the maps $\iota : I \to A$ and $q : A \to A/I$ are $G$-equivariant. Therefore, \cite[Prop. 6.3]{lalonde2014} guarantees that they 
yield maps $\iota \rtimes \id : \I \rtimes_{\alpha \vert_I} G \to \A \rtimes_\alpha G$ and $q \rtimes \id : \A \rtimes_\alpha G \to (\A/\I) \rtimes_{\alpha^I} G$. 
Furthermore, it is shown in \cite[Lem. 6.3.2]{ananth-renault} that the sequence
\begin{equation}
\label{eq:invariantsequence}
	\xymatrix{
		0 \ar[r] & \I \rtimes_{\alpha \vert_I} G \ar[r]^{\iota \rtimes \id} & \A \rtimes_\alpha G \ar[r]^(0.45){q \rtimes \id} & \A/\I \rtimes_{\alpha^I} 
		G \ar[r] & 0.
	}
\end{equation}
is exact. Alternatively, this fact follows from \cite[Thm. 3.7]{dana-marius}, which is a more general statement about Fell bundle $C^*$-algebras. It is
also shown in \cite[Prop. 6.10]{lalonde2014} that $\iota$ and $q$ induce maps at the level of reduced crossed products, so we get a sequence
\begin{equation}
\label{eq:ReducedExactSequence}
		\xymatrix{
		0 \ar[r] & \I \rtimes_{\alpha \vert_I, r} G \ar[r]^{\iota \rtimes \id} & \A \rtimes_{\alpha, r} G \ar[r]^(0.43){q \rtimes \id} & 
			\A/\I \rtimes_{\alpha^I, r} G \ar[r] & 0.
		}	
\end{equation}
However, \eqref{eq:ReducedExactSequence} is not exact in general. Gromov has famously produced examples of \emph{groups} for 
which \eqref{eq:ReducedExactSequence} fails to be exact, and there are more tractable examples of groupoids for which 
\eqref{eq:ReducedExactSequence} is not exact. Given the unfortunate existence of such groupoids, it makes sense to single out the ones for 
which \eqref{eq:ReducedExactSequence} is always exact. 

\begin{defn}
	A second countable locally compact groupoid $G$ is said to be \emph{exact} if whenever $(\A, G, \alpha)$ is a separable 
	groupoid dynamical system and $I$ is a $G$-invariant ideal in $A$, the sequence
	\begin{equation}
	\label{eq:invariantsequence2}
		\xymatrix{
			0 \ar[r] & \I \rtimes_{\alpha \vert_I, r} G \ar[r]^{\iota \rtimes \id} & \A \rtimes_{\alpha, r} G \ar[r]^(0.45){q \rtimes \id} & 
				\A/\I \rtimes_{\alpha^I, r} G \ar[r] & 0.
		}
	\end{equation}
	is short exact.
\end{defn}

Note that we have the following immediate corollary to \cite[Lem. 6.3.2]{ananth-renault} when $G$ is measurewise amenable.

\begin{cor}
	If $G$ is a measurewise amenable secound countable locally compact Hausdorff groupoid, then $G$ is exact.
\end{cor}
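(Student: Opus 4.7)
The plan is to reduce to the full-crossed-product sequence \eqref{eq:invariantsequence}, whose exactness is already supplied by \cite[Lem. 6.3.2]{ananth-renault}. The additional input needed is the standard consequence of measurewise amenability (see, e.g., \cite[Thm. 3.2.16]{ananth-renault}): if $G$ is measurewise amenable, then for every separable groupoid dynamical system $(\B, G, \beta)$ the canonical quotient $\B \rtimes_\beta G \to \B \rtimes_{\beta, r} G$ is an isomorphism. This is exactly the bridge between \eqref{eq:invariantsequence} and \eqref{eq:ReducedExactSequence}.

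First, I would fix a separable dynamical system $(\A, G, \alpha)$ and a $G$-invariant ideal $I \subseteq A$, and apply the amenability criterion above to each of the three systems $(\I, G, \alpha\vert_I)$, $(\A, G, \alpha)$, and $(\A/\I, G, \alpha^I)$. This yields a commuting diagram in which each vertical arrow from the full to the reduced crossed product is an isomorphism. Because the horizontal maps $\iota \rtimes \id$ and $q \rtimes \id$ are natural (they are induced on both full and reduced levels by the same $G$-equivariant homomorphisms $\iota$ and $q$, as noted after Definition~2.5 above), the diagram genuinely commutes.

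Finally, since \eqref{eq:invariantsequence} is exact by \cite[Lem. 6.3.2]{ananth-renault}, transporting across these vertical isomorphisms immediately gives exactness of \eqref{eq:ReducedExactSequence}, which is precisely the defining condition for $G$ to be exact. There is no genuine obstacle to overcome here; the content is entirely packaged in the two cited results, and this corollary simply records their combination.
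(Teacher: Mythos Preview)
Your argument is correct and matches the paper's intent: the paper states this as an immediate corollary to \cite[Lem.~6.3.2]{ananth-renault} without giving a separate proof, the unstated point being precisely that measurewise amenability forces the full and reduced crossed products to coincide so that \eqref{eq:invariantsequence} and \eqref{eq:ReducedExactSequence} are the same sequence. You have simply made that implicit step explicit.
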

%\begin{proof}
%	If $G$ is measurewise amenable and $(\A, G, \alpha)$ is a separable dynamical system, then the sequence \eqref{eq:invariantsequence}
%	coincides with \eqref{eq:invariantsequence2}. But this sequence of full crossed products is exact, so it follows that $G$ is exact.
%\end{proof}

\section{The Main Technical Result}
The proof of the main theorem relies on a variation of \cite[Thm. 3.9]{kw99-2} for groupoids. This section is devoted to proving this fairly technical fact.
%that involves the pullback dynamical systems from Section 2.

Let $G$ be a locally compact Hausdorff groupoid with Haar system $\{\lambda_G^u\}_{u \in \go}$, and let $(\A, G, \alpha)$ be a dynamical
system. Suppose $H$ is a closed subgroupoid of $G$ with Haar system $\{\lambda_H^u\}_{u \in \ho}$, and put $X = s^{-1}(\ho)$.
Then the dynamical system $(\rho^*\A, G^H, \sigma)$ is equivalent to $(\A \vert_\ho, H, \alpha\vert_H)$, so the reduced crossed products 
$\rho^*\A \rtimes_{\sigma, r} X^H$ and $\A \vert_{\ho} \rtimes_{\alpha \vert_H, r} H$ are Morita equivalent by Corollary \ref{cor:ReducedImprimitivityDS}.
If $I \subseteq A$ is a $G$-invariant ideal,
%Then $I$ and $A/I$ are both $C_0(\go)$-algebras, and we denote the 
%associated upper semicontinuous $C^*$-bundles by $\I$ and $\A/\I$, respectively. Furthermore, 
then we have dynamical systems 
$(\I, G, \alpha \vert_I)$ and $(\A/\I, G, \alpha^I)$, and the restrictions to $H$ yield dynamical systems $(\I \vert_\ho, H, \alpha \vert_I)$ 
and $(\A/\I \vert_\ho, H, \alpha^I)$. We need to relate these to the corresponding pullback dynamical systems.
We begin with the following generalization of \cite[Lemma 3.8]{kw99-2}.
	
\begin{lem}
\label{lem:kw3.8}
	Let $\iota : I \to A$ denote the inclusion map and $q : A \to A/I$ the quotient map. Then the sequence
	\begin{equation}
	\label{eq:pullbacksequence}
	\xymatrix{
		0 \ar[r] & \rho^*I \ar[r]^(0.48){\rho^*\iota} & \rho^*A \ar[r]^(0.4){\rho^*q} & \rho^*(A/I) \ar[r] & 0
	}
	\end{equation}
	is exact.
\end{lem}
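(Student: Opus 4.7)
The plan is to reduce exactness of the section-level sequence to fiberwise exactness of the underlying bundles. Identify $\rho^*A$, $\rho^*I$, and $\rho^*(A/I)$ with the section algebras of the pullback bundles $\rho^*\A$, $\rho^*\I$, and $\rho^*(\A/\I)$ over $X/H$, so that $\rho^*\iota$ and $\rho^*q$ are induced at each $[\gamma] \in X/H$ by the fiber maps $\iota_{r(\gamma)} \colon \I_{r(\gamma)} \to \A_{r(\gamma)}$ and $q_{r(\gamma)} \colon \A_{r(\gamma)} \to (\A/\I)_{r(\gamma)}$. Under the canonical identification $(\A/\I)_u \cong \A_u/\I_u$, each fiber sequence is the standard short exact sequence associated to an ideal in a $C^*$-algebra, hence exact.

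Injectivity of $\rho^*\iota$ and exactness at the middle follow essentially automatically from fiberwise exactness. Since $\iota_u$ is isometric on each fiber and the norm on $\rho^*A$ is the supremum of fiber norms, $\rho^*\iota$ is isometric and hence injective. For the middle, if $a \in \rho^*A$ satisfies $\rho^*q(a) = 0$, then $a([\gamma]) \in \ker q_{r(\gamma)} = \I_{r(\gamma)}$ for every $[\gamma] \in X/H$, so $a$ realizes as a continuous section of the subbundle $\rho^*\I$ and lies in $\image(\rho^*\iota)$.

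The main content is surjectivity of $\rho^*q$. Since the image of a $*$-homomorphism of $C^*$-algebras is closed and compactly supported sections are dense in $\rho^*(A/I)$, it suffices to produce norm-approximations: given such a $b$ with support $K \subseteq X/H$ and $\epsilon > 0$, we build $a \in \rho^*A$ with $\norm{\rho^*q(a) - b} < \epsilon$. For each $[\gamma_0] \in K$, the fiberwise surjectivity of $q_{r(\gamma_0)}$ combined with the existence of enough continuous sections of the upper semicontinuous bundle $\A$ over $\go$ yields an element $a_{[\gamma_0]} \in A$ with $q(a_{[\gamma_0]})(r(\gamma_0)) = b([\gamma_0])$. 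Upper semicontinuity of the norm then provides a neighborhood $U_{[\gamma_0]}$ of $[\gamma_0]$ on which $\norm{\rho^*q(\rho^*a_{[\gamma_0]}) - b} < \epsilon$. Extracting a finite subcover $\{U_1, \dots, U_n\}$ of $K$ with a subordinate partition of unity $\{\phi_i\}$ on $X/H$, the section $a = \sum_i \phi_i \rho^*(a_i)$ lies in $\rho^*A$, and $C_0(X/H)$-linearity of $\rho^*q$ together with $\sum_i \phi_i \equiv 1$ on $K$ gives $\norm{\rho^*q(a) - b} < \epsilon$.

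The main obstacle is the surjectivity step; it requires both the existence of local sections with prescribed fiber values (a standard property of upper semicontinuous $C^*$-bundles over a $C_0$-base) and careful bookkeeping to pass between the $C_0(\go)$-module structure on $A$ and the $C_0(X/H)$-module structure on $\rho^*A$ via pullback. Once these ingredients are assembled, the partition-of-unity gluing completes the argument, and exactness of \eqref{eq:pullbacksequence} follows.
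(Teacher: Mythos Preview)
Your argument is correct. The injectivity and middle-exactness steps match the paper's reasoning almost verbatim: both reduce to the fiberwise statements $\ker \iota_{r(\gamma)} = 0$ and $\ker q_{r(\gamma)} = \I_{r(\gamma)}$, and both note that a section of $\rho^*\A$ with values in $\I_{r(\gamma)}$ is automatically a section of the subbundle $\rho^*\I$.

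The surjectivity argument, however, takes a different route. The paper exploits the balanced tensor product identification $\rho^*(A/I) \cong C_0(X/H) \otimes_{C_0(\go)} A/I$: elementary tensors $f \otimes a$ with $f \in C_c(X/H)$ and $a \in A/I$ span a dense subspace, and each such tensor lifts to $f \otimes \tilde{a}$ for any $\tilde{a} \in A$ with $q(\tilde{a}) = a$, so the image of $\rho^*q$ is dense and hence everything. Your approach instead works directly with sections of the pullback bundle, using fiberwise surjectivity of $q_{r(\gamma)}$, upper semicontinuity of the norm, and a partition-of-unity gluing to approximate an arbitrary compactly supported section. Both arguments ultimately rely on the fact that a $*$-homomorphism of $C^*$-algebras has closed range, so density of the image suffices. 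The paper's proof is shorter and more structural, since the tensor-product description immediately hands you a convenient dense set; your argument is more self-contained at the bundle level and avoids invoking the tensor-product model, at the cost of the extra bookkeeping you flag in your final paragraph.
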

\begin{proof}
	By \cite[Prop. 3.7]{lalonde2014}, the map $\rho^*\iota$ is given by
	\[
		\rho^*\iota(f)([\gamma]) = \iota_{\rho([\gamma])} ( f([\gamma]) ) = \iota_{r(\gamma)} (f([\gamma]) )
	\]
	for $f \in \rho^*I$ and $\gamma \in X$. Therefore, $\rho^*\iota(f) = 0$ if and only if $\iota_{r(\gamma)}(f([\gamma])) = 0$ for all
	$\gamma \in X$. This in turn holds if and only if $f([\gamma]) = 0$ for all $\gamma \in X$, since $\iota_{r(\gamma)}$ is just the 
	inclusion of the ideal $\I_{r(\gamma)}$ into $\A_{r(\gamma)}$. Therefore, $\rho^*\iota$ is injective.
	
	Now recall that $\rho^*(A/I) \cong C_0(X/H) \otimes_{C_0(\go)} A/I$, where we identify $f \otimes a$ 
	with the section $\gamma \cdot H \mapsto f([\gamma]) a(\rho([\gamma]))$. Hence the set
	\[
		C_c(X/H) \odot A/I = \spa\{ f \otimes a : f \in C_c(X/H), a \in A/I\},
	\]
	is dense in $\rho^*(A/I)$. 
%	Thus it will suffice to see that $C_c(X/H) \odot A/I$ lies in the image of $\rho^*q$. 
	Let $f \in C_c(X/H)$ and $a \in A/I$, 
	and pick $\tilde{a} \in A$ with $q(\tilde{a}) = a$. Then $f \otimes \tilde{a} \in C_c(X/H) \odot A \subseteq \rho^*A$, so
	\begin{align*}
		\rho^*q(f \otimes \tilde{a})([\gamma]) &= q_{\rho([\gamma])} \bigl( f \otimes \tilde{a}([\gamma]) \bigr) \\
%			&= q_{r(\gamma)} \bigl( f(\gamma \cdot H) \bigr) \tilde{a} \bigl( \rho(\gamma \cdot H) \bigr) \\
			&= f([\gamma]) q_{r(\gamma)} \bigl( \tilde{a}(r(\gamma)) \bigr) \\
%			&= f(\gamma \cdot H) q(\tilde{a})(r(\gamma)) \\
			&= f([\gamma]) a(r(\gamma)) \\
			&= f \otimes a([\gamma])
	\end{align*}
	for all $\gamma \in X$. Therefore, $\rho^*q(f \otimes \tilde{a}) = f \otimes a$, so $\rho^*q$ maps onto the dense subspace $C_c(X/H) \odot A/I$ 
	of $\rho^*(A/I)$. It follows that $\rho^*q$ is surjective.
	
	Finally, we check that $\image(\rho^*\iota) = \ker(\rho^*q)$. Certainly we have $\rho^*q \circ \rho^*\iota = 0$: 
	\[
		\rho^*q(\rho^*\iota(f))([\gamma]) = q_{r(\gamma)} \bigl( \rho^*\iota(f)([\gamma]) \bigr) = 
			q_{r(\gamma)} \bigl( \iota_{r(\gamma)}(f([\gamma])) \bigr) = 0
	\]
	for $f \in \rho^*I$, since $q \circ \iota = 0$. Now suppose $g \in \ker(\rho^*q)$. Then for all $\gamma \in X$ we have
	\[
		\rho^*q(g)([\gamma]) = q_{r(\gamma)} ( g([\gamma]) ) = 0,
	\]
	so $g([\gamma]) \in \ker q_{r(\gamma)} = \image \iota_{r(\gamma)} = \I_{r(\gamma)}$. Therefore, if we define a section 
	$\tilde{g} \in \rho^*I$ by $\tilde{g}([\gamma]) = g([\gamma])$, then $\rho^*\iota(\tilde{g}) = g$. Hence $g \in 
	\image(\rho^*\iota)$, and (\ref{eq:pullbacksequence}) is exact.
\end{proof}
	
The rest of this section deals with modifying \cite[Thm. 3.9]{kw99-2} for groupoids. This requires us to consider the effect of the functor 
$\underline{\phantom{\A}} \rtimes_{\sigma, r} X^H$ on the sequence (\ref{eq:pullbacksequence}). For this to make sense, we first need
to know that $\rho^*I$ is $X^H$-invariant.
	
\begin{prop}
\label{prop:XHInvariantIdeal}
	The ideal $\rho^*I$ is invariant under the $X^H$-action on $\rho^*A$.
\end{prop}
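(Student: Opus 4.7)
The plan is to unwind the definition of $X^H$-invariance and reduce directly to the $G$-invariance of $I$. First I would identify the fibers of the two pullback bundles: for $[\zeta] \in X/H$, the definition of the pullback gives $(\rho^*\I)_{[\zeta]} = \I_{\rho([\zeta])} = \I_{r(\zeta)}$, and likewise $(\rho^*\A)_{[\zeta]} = \A_{r(\zeta)}$. Since the maps $\iota_{r(\zeta)} : \I_{r(\zeta)} \to \A_{r(\zeta)}$ are ideal inclusions fiberwise, Lemma \ref{lem:kw3.8} already exhibits $\rho^*\iota$ as realizing $\rho^*I$ as an ideal inside $\rho^*A$.

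Next I would recall the action: for $[\gamma,\eta] \in X^H$, composability in $X^H$ is defined via $(\gamma,\eta) \in X *_s X$, so $s(\gamma) = s(\eta)$. Consequently $\gamma \eta^{-1}$ is a well-defined element of $G$ with $s(\gamma\eta^{-1}) = r(\eta)$ and $r(\gamma\eta^{-1}) = r(\gamma)$. The $X^H$-action is given by $\sigma_{[\gamma,\eta]}(a) = \alpha_{\gamma\eta^{-1}}(a)$, so
\[
    \sigma_{[\gamma,\eta]}\bigl( (\rho^*\I)_{s([\gamma,\eta])} \bigr) = \alpha_{\gamma\eta^{-1}}\bigl( \I_{r(\eta)} \bigr) = \I_{r(\gamma)} = (\rho^*\I)_{r([\gamma,\eta])},
\]
where the middle equality is exactly the $G$-invariance hypothesis applied to $\gamma\eta^{-1} \in G$. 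This is precisely the invariance required by the definition.

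There is no substantial obstacle to this argument; the only thing to be careful about is that the computation is independent of the chosen representative $(\gamma,\eta)$ for the class $[\gamma,\eta]$, but this is already built into the well-definedness of $\sigma$, which was verified in \cite[Prop. 2.3]{goehle2010}. The proof should therefore be essentially a short diagram-chase at the level of fibers.
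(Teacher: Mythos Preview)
Your proposal is correct and follows essentially the same approach as the paper: identify the fibers $(\rho^*\I)_{[\zeta]}$ with $\I_{r(\zeta)}$, recall that $\sigma_{[\gamma,\eta]} = \alpha_{\gamma\eta^{-1}}$, and reduce directly to the $G$-invariance of $I$. The paper's proof is slightly terser and also notes explicitly that $\sigma_{[\gamma,\eta]}$ restricts to an isomorphism on the fibers of $\rho^*\I$, but this is implicit in your displayed equality.
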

\begin{proof}
	We need to check that if $a \in \rho^*\I_{s([\gamma, \eta])}$, then $\sigma_{[\gamma, \eta]}(a) \in \rho^*\I_{r([\gamma, \eta])}$. Well,
	$\rho^*\I_{s([\gamma, \eta])} = \rho^*\I_{[\eta]}$, which is naturally identified with $\I_{r(\eta)}$. Then
	\[
		\sigma_{[\gamma, \eta]}(a) = \alpha_{\gamma \eta^{-1}}(a) \in \I_{r(\gamma\eta^{-1})} = \I_{r(\gamma)},
	\]
	since $I \subseteq A$ is $\alpha$-invariant. But $\rho^*\I_{r([\gamma, \eta])} = \I_{r(\gamma)}$, so $\sigma_{[\gamma, \eta]}(a) \in 
	\rho^*\I_{r([\gamma, \eta])}$. In fact, $\sigma_{[\gamma, \eta]}$ restricts to an isomorphism of $\rho^*\I_{s([\gamma, \eta])}$ onto
	$\rho^*\I_{r([\gamma, \eta])}$, since $\alpha_{\gamma \eta^{-1}} : \I_{r(\eta)} \to I_{r(\gamma)}$ is an isomorphism.
\end{proof}
	
Proposition \ref{prop:XHInvariantIdeal} actually buys us a little more. The fact that $\rho^*I$ is an
$X^H$-invariant ideal is equivalent to saying that the inclusion $\rho^*\iota$ is an $X^H$-equivariant homomorphism. Therefore, 
\cite[Prop. 6.10]{lalonde2014} guarantees that there is a homomorphism
\[
	\rho^*\iota \rtimes \id : \rho^*\I \rtimes_{\sigma \vert_I, r} X^H \to \rho^*\A \rtimes_{\sigma, r} X^H.
\]
Similarly, the quotient map $\rho^*q : \rho^*A \to \rho^*(A/I)$ is $X^H$-equivariant, so we have a homomorphism
\[
	\rho^*q \rtimes \id : \rho^*\A \rtimes_{\sigma, r} X^H \to \rho^*(\A/\I) \rtimes_{\alpha^I, r} X^H.
\]
With these ideas in place, we can now state our version of \cite[Thm. 3.9]{kw99-2}. For simplicity, we will write crossed products of the
form $\A\vert_{\ho} \rtimes_{\alpha\vert_, r} H$ as $\A \rtimes_{\alpha, r} H$, with it understood that we are restricting the bundles and actions
to $H$.
	
\begin{thm}
\label{thm:kwexact}
	The sequence
	\[
		\xymatrix{
			0 \ar[r] & \icrossh \ar[r]^{\iota \rtimes \id} & \acrossh \ar[r]^(0.43){q \rtimes \id} & \amodicrossh \ar[r] & 0
		}
	\]
	is exact if and only if 
	\[
		\xymatrix@C=0.5cm{
			0 \ar[r] & \rhoicrossgh \ar[rr]^(0.52){\rho^*\iota \rtimes \id} & & \rhoacrossgh 
				\ar[rr]^(0.42){\rho^*q \rtimes \id} & & \rhoamodicrossgh \ar[r] & 0
		}
	\]
	is exact.
\end{thm}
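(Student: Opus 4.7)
My approach is to transport exactness between the two sequences via the Rieffel correspondence coming from the three Morita equivalences given by Corollary \ref{cor:ReducedImprimitivityDS}, applied respectively to the dynamical systems $(\I, G, \alpha\vert_I)$, $(\A, G, \alpha)$, and $(\A/\I, G, \alpha^I)$. Let $M_\I$, $M_\A$, and $M_{\A/\I}$ denote the three resulting reduced imprimitivity bimodules, realized as completions of $\Gamma_c(X, s^*\I)$, $\Gamma_c(X, s^*\A)$, and $\Gamma_c(X, s^*(\A/\I))$, respectively. Applying $\iota$ and $q$ pointwise yields maps at the $\Gamma_c$ level that are compatible with the bimodule actions and inner products of Proposition \ref{prop:imprimitivityDS}, as can be verified by direct inspection of the formulas, using the $G$-invariance of $\I$ established in Proposition \ref{prop:XHInvariantIdeal}.

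The central step is to identify $M_\I$ as a closed sub-imprimitivity bimodule of $M_\A$ corresponding, under the Rieffel correspondence for $M_\A$, to the matched pair of ideals $(\rhoicrossgh, \icrossh)$. That is, I would verify
\[
	\rhoicrossgh \cdot M_\A = M_\A \cdot \icrossh = M_\I.
\]
The inclusions $\subseteq M_\I$ follow from the ideal property of $\I$ applied to the action formulas; the reverse containments follow from fullness of $M_\I$ as a $\rhoicrossgh$--$\icrossh$ imprimitivity bimodule, which is supplied by Corollary \ref{cor:ReducedImprimitivityDS} applied to $\I$. Hence the Rieffel correspondence for $M_\A$ restricts to a bijection between the ideals of $\acrossh$ containing $\icrossh$ and those of $\rhoacrossgh$ containing $\rhoicrossgh$, mediated by the quotient bimodule $M_\A / M_\I$, which is itself an imprimitivity bimodule for $\acrossh / \icrossh$ and $\rhoacrossgh / \rhoicrossgh$ by standard Rieffel theory.

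Next, applying $q$ pointwise to sections produces a canonical contractive, equivariant surjection $\overline{\tilde q} : M_\A / M_\I \to M_{\A/\I}$ intertwining the induced maps $\overline{\rho^*q \rtimes \id}$ and $\overline{q \rtimes \id}$ onto $\rhoamodicrossgh$ and $\amodicrossh$. The sub-bimodule $\ker(\overline{\tilde q})$ corresponds under the Rieffel correspondence for $M_\A / M_\I$ to a matched pair which, by the faithfulness of the $\rhoamodicrossgh$- and $\amodicrossh$-actions on $M_{\A/\I}$ (Corollary \ref{cor:ReducedImprimitivityDS}), must equal $(\ker(\overline{\rho^*q \rtimes \id}), \ker(\overline{q \rtimes \id}))$. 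Lifting, this shows that $\ker(q \rtimes \id)$ and $\ker(\rho^*q \rtimes \id)$ are matched under the Rieffel correspondence for $M_\A$. Combined with the matching $\icrossh \leftrightarrow \rhoicrossgh$, the equality $\icrossh = \ker(q \rtimes \id)$---i.e., exactness at the middle of the $H$-sequence---is therefore equivalent to the equality $\rhoicrossgh = \ker(\rho^*q \rtimes \id)$, i.e., to exactness at the middle of the $X^H$-sequence. Injectivity of the $\iota$-maps is supplied by \cite[Prop. 6.10]{lalonde2014}, and surjectivity of the $q$-maps follows from density together with the closed-range property of $*$-homomorphisms. The hard part is the central identification: verifying $\rhoicrossgh \cdot M_\A = M_\I = M_\A \cdot \icrossh$ requires careful bookkeeping with the reduced crossed product norms on $M_\I$ and $M_\A$, and in particular knowing that the inclusion $M_\I \hookrightarrow M_\A$ extends from $\Gamma_c$ to an isometric embedding of Hilbert modules---a consequence of the isometric inclusion $\icrossh \hookrightarrow \acrossh$ at the reduced level.
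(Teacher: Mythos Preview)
Your proposal is correct and, for the first half, tracks the paper closely: the identification of $M_\I$ as the closed sub-bimodule $\rhoicrossgh \cdot M_\A = M_\A \cdot \icrossh$ of $M_\A$ is exactly the content of the paper's Lemmas~\ref{lem:one} through the two lemmas that follow it, including the observation that the isometric embedding $M_\I \hookrightarrow M_\A$ comes from the isometric inclusion $\icrossh \hookrightarrow \acrossh$.

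Where you genuinely diverge is in matching the kernels $\ker(q \rtimes \id)$ and $\ker(\rho^*q \rtimes \id)$. The paper does this representation-theoretically: it fixes a faithful representation $\pi$ of $(A/I)(\ho)$, builds the regular representations $\indhpi$ and $\indhpitilde = \Ind_{\ho}^H(\pi \circ q)$, and exhibits explicit unitaries (Lemmas~\ref{lem:UnitaryU} and~\ref{lem:tauintertwine}) showing that $\indhpitilde$ is equivalent to $(\indhpi)\circ(q\rtimes\id)$ and that the corresponding $\Zme_r$-induced representations are likewise intertwined. One then reads off that $\ker(\rho^*q\rtimes\id)$ is the $\Zme_r$-induced ideal of $\ker(q\rtimes\id)$. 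Your route is purely bimodule-theoretic: you pass to the quotient imprimitivity bimodule $M_\A/M_\I$, observe that $s^*q$ induces an equivariant contraction $\overline{\tilde q}$ onto $M_{\A/\I}$ respecting the inner products (so that $\langle \overline{\tilde q}(m),\overline{\tilde q}(n)\rangle = \overline{q\rtimes\id}(\langle m,n\rangle)$ and similarly on the left), and then use that the closed sub-bimodule $\ker(\overline{\tilde q})$ must correspond under the Rieffel correspondence to the pair of kernels, by faithfulness of the actions on the imprimitivity bimodule $M_{\A/\I}$. This is a cleaner, more categorical argument that avoids any direct integral or Hilbert-space computation; the paper's approach, by contrast, makes the link to regular representations explicit and keeps everything at the level of concrete formulas, which pays dividends elsewhere (e.g., when one wants to track specific representations through the equivalence). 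Both are valid; yours is shorter, while the paper's is more self-contained for a reader less fluent with abstract Rieffel machinery.
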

	
The proof requires a series of lemmas. To simplify notation, we use $\BI$ and $\EI$ to denote the ideals $\icrossh$ and $\rhoicrossgh$ of 
$\acrossh$ and $\rhoacrossgh$, respectively. Similarly, we let $\J = \ker(q \rtimes \id)$ and $\Jtwiddle = \ker(\rho^*q \rtimes \id)$. We clearly 
have $\BI \subseteq \J$ and $\EI \subseteq \Jtwiddle$, and we know from Corollary \ref{cor:ReducedImprimitivityDS} that $\rhoacrossgh$ 
is Morita equivalent to $\acrossh$. Therefore, our goal is to show that $\BI$ and $\EI$ are matched up under the Rieffel correspondence, and 
likewise for $\J$ and $\Jtwiddle$. \index{Rieffel correspondence}
	
Note that Corollary \ref{cor:ReducedImprimitivityDS} also implies that $\rhoicrossgh$ and $\icrossh$ are Morita equivalent via a 
completion of $\Y_0 = \Gamma_c(X, s^*\I)$. Also, it is clear that the set $\Y_0$ embeds naturally into $\Zme_0 = \Gamma_c(X, s^*\A)$. 
We show first that this embedding extends to an embedding of the $\rhoicrossgh - \icrossh$-imprimitivity bimodule $\Y_r = \overline{\Y}_0$ 
into the $\rhoacrossgh - \acrossh$-imprimitivity bimodule $\Zme_r = \overline{\Zme}_0$.

\begin{lem}
\label{lem:one}
	The natural embedding of $\Y_0$ into $\Zme_0$ is isometric with respect to the reduced norms (i.e., the norms coming from 
	$\icrossh$ and $\acrossh$, respectively), and therefore extends to an embedding of $\Y_r$ into $\Zme_r$.
\end{lem}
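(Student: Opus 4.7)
The plan is to reduce the isometry claim to the statement that the canonical inclusion $\icrossh \hookrightarrow \acrossh$ is isometric at the reduced level, and then to verify that isometry using regular representations.

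For any pre-imprimitivity bimodule the norm is determined by the right inner product through $\norm{z}^2 = \norm{\hip{z}{z}_B}_B$, so
\[
    \norm{z}_{\Y_r}^2 = \norm{\hip{z}{z}_{\icrossh}}_{\icrossh, r}, \qquad \norm{z}_{\Zme_r}^2 = \norm{\hip{z}{z}_{\acrossh}}_{\acrossh, r}.
\]
The formula for $\hip{\cdot}{\cdot}_{\A \rtimes H}$ in Proposition~\ref{prop:imprimitivityDS} is identical in both settings, and for $z \in \Y_0$ its integrand takes values in $\I$. Hence $\hip{z}{z}_{\icrossh}$ and $\hip{z}{z}_{\acrossh}$ coincide as a single element $c \in \Gamma_c(H, r^*\I) \subseteq \Gamma_c(H, r^*\A)$, and the lemma reduces to showing that $\norm{c}_{\icrossh, r} = \norm{c}_{\acrossh, r}$ for every such $c$.

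To verify this isometric inclusion, I would use the regular representations of Example~\ref{exmp:RegularReps}. Fix a faithful nondegenerate representation $\tilde\pi \colon A \to B(\Hil)$; since $I$ is an ideal, $\pi := \tilde\pi|_I$ is faithful on $I$, the subspace $\Hil_I := \overline{\pi(I)\Hil}$ is $\tilde\pi(A)$-invariant and carries a faithful nondegenerate representation of $I$, and $\tilde\pi(i)|_{\Hil_I^\perp} = 0$ for every $i \in I$. Inducing then yields the regular representations $\Ind\tilde\pi$ of $\acrossh$ and $\Ind(\pi|_{\Hil_I})$ of $\icrossh$, computing $\norm{\cdot}_{\acrossh, r}$ and $\norm{\cdot}_{\icrossh, r}$ respectively.

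The principal obstacle is to show that for $c \in \Gamma_c(H, r^*\I)$ the operator $\Ind\tilde\pi(c)$ decomposes along $\Hil = \Hil_I \oplus \Hil_I^\perp$: it annihilates the part of the induction Hilbert space associated to $\Hil_I^\perp$, and on the part associated to $\Hil_I$ it is unitarily equivalent to $\Ind(\pi|_{\Hil_I})(c)$. The $H$-invariance of $\I$ ensures that the action $c \cdot z(\gamma) = \int_H \alpha_\gamma^{-1}(c(\eta)) z(\eta^{-1}\gamma)\, d\lambda_H^{r(\gamma)}(\eta)$ maps $\Gamma_c(H, s^*\A)$ into $\Gamma_c(H, s^*\I)$ whenever $c$ is $\I$-valued, and together with $\tilde\pi(I)|_{\Hil_I^\perp} = 0$ this forces the desired decomposition. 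The delicate step is implementing the unitary identification between the $\Hil_I$-piece of $\overline{\Gamma_c(H, s^*\A)} \otimes_A \Hil$ and $\overline{\Gamma_c(H, s^*\I)} \otimes_I \Hil_I$, which requires careful matching of the $C_0(\ho)$-balanced tensor product structures and the inner products. Once this is done, $\norm{c}_{\acrossh, r} = \norm{\Ind\tilde\pi(c)} = \norm{\Ind(\pi|_{\Hil_I})(c)} = \norm{c}_{\icrossh, r}$, and the lemma follows.
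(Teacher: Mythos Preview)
Your reduction is exactly the paper's argument: observe that the right inner products on $\Y_0$ and $\Zme_0$ are given by the same formula, so for $z \in \Y_0$ the element $\hip{z}{z}$ is a single section in $\Gamma_c(H, r^*\I) \subseteq \Gamma_c(H, r^*\A)$, and the isometry of the bimodule embedding reduces to the isometry of the inclusion $\Gamma_c(H, r^*\I) \hookrightarrow \Gamma_c(H, r^*\A)$ in the reduced norms.

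Where you diverge is in the second half. The paper simply \emph{cites} the injectivity of $\iota \rtimes \id : \icrossh \to \acrossh$ as a known fact (it is the left end of the sequence \eqref{eq:ReducedExactSequence}, whose injectivity always holds and is established in \cite[Prop.~6.10]{lalonde2014}), whereas you sketch a direct proof via regular representations. Your sketch is correct in outline: choosing a faithful nondegenerate $\tilde\pi$ on $A(\ho)$, restricting to $I(\ho)$ on the essential subspace $\Hil_I$, and checking that $\Ind\tilde\pi(c)$ vanishes on the $\Hil_I^\perp$-piece and agrees with $\Ind(\pi|_{\Hil_I})(c)$ on the $\Hil_I$-piece is the standard argument. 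The ``delicate'' unitary identification you flag is handled by noting that the inclusion $\Gamma_c(H, s^*\I) \odot \Hil_I \hookrightarrow \Gamma_c(H, s^*\A) \odot \Hil$ preserves inner products and that its closed range is exactly the $\Hil_I$-summand (use that $\Gamma_c(H, s^*\A) \cdot I(\ho) \subseteq \Gamma_c(H, s^*\I)$ and an approximate identity for $I(\ho)$). So your proof is correct but strictly longer than the paper's, which treats the isometric inclusion of reduced crossed products as already established.
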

\begin{proof}
	Let $z, w \in \Y_0$. Then for $\eta \in H$ we have
	\begin{align*}
		\hip{z}{w}_{\icrossh}(\eta) &= \int_G z(\xi \eta^{-1})^* \alpha_\eta(w(\xi)) \, d\lambda_{s(\eta)}(\xi),
	\end{align*}
	which is precisely the formula for $\hip{z}{w}_{\acrossh}(\eta)$ obtained from viewing $z$ and $w$ as elements of $\Gamma_c(X, s^*\A)$.
	Since $\Gamma_c(H, r^*\I)$ embeds isometrically into $\Gamma_c(H, r^*\A)$ with respect to the reduced norm, it follows that the 
	embedding $\Y_0 \hookrightarrow \Zme_0$ is isometric.
\end{proof}
	
	\begin{lem}
		If we view $\Y_r$ as a closed sub-bimodule of $\Zme_r$ via Lemma \ref{lem:one}, we have $\Y_r = \overline{\Zme_r \cdot \BI}$. 
		Consequently, $\Y_r$ is the submodule assigned to $\BI$ under the Rieffel correspondence.
	\end{lem}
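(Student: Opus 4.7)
The plan is to prove the two inclusions $\overline{\Zme_r \cdot \BI} \subseteq \Y_r$ and $\Y_r \subseteq \overline{\Zme_r \cdot \BI}$ separately; the final conclusion about the Rieffel correspondence is then automatic, since under that correspondence the submodule attached to an ideal $\BI \subseteq \acrossh$ is by definition $\overline{\Zme_r \cdot \BI}$.

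For the inclusion $\overline{\Zme_r \cdot \BI} \subseteq \Y_r$, I would work on the dense subalgebra/submodule level. Take $z \in \Zme_0 = \Gamma_c(X, s^*\A)$ and $f \in \Gamma_c(H, r^*\I) \subseteq \icrossh$, and examine the formula
\[
z \cdot f(\gamma) = \int_H \alpha_\eta\bigl( z(\gamma \eta) f(\eta^{-1}) \bigr) \, d\lambda_H^{s(\gamma)}(\eta)
\]
from Proposition \ref{prop:imprimitivityDS}. Since $f(\eta^{-1}) \in \I_{s(\eta)}$ and $\I$ is an ideal in $\A$ fiberwise, the product $z(\gamma\eta) f(\eta^{-1})$ lies in $\I_{s(\eta)}$; then the $G$-invariance of $\I$ (i.e.\ the hypothesis that $\alpha_\eta$ restricts to an isomorphism $\I_{s(\eta)} \to \I_{r(\eta)}$ for $\eta \in H$) guarantees that the integrand takes values in $\I_{s(\gamma)}$, so that $z \cdot f \in \Gamma_c(X, s^*\I) = \Y_0$. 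Passing to completions via Lemma \ref{lem:one} and the continuity of the right action yields $\Zme_r \cdot \BI \subseteq \Y_r$, and hence the claimed inclusion.

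For the reverse inclusion, I would invoke the general fact that any right Hilbert module over a $C^*$-algebra is nondegenerate: since $\Y_r$ is a right Hilbert $\BI$-module (it is in fact the right-hand side of an imprimitivity bimodule by Corollary \ref{cor:ReducedImprimitivityDS}), the Cohen factorization/approximate identity argument gives $\Y_r = \overline{\Y_r \cdot \BI}$. Combined with Lemma \ref{lem:one}, which realizes $\Y_r$ as a closed sub-bimodule of $\Zme_r$, this yields $\Y_r = \overline{\Y_r \cdot \BI} \subseteq \overline{\Zme_r \cdot \BI}$.

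Putting the two inclusions together gives $\Y_r = \overline{\Zme_r \cdot \BI}$, and the ``consequently'' clause is then just the definition of the Rieffel correspondence applied to the imprimitivity bimodule $\Zme_r$. I expect the only real technical point to be the first inclusion: one has to keep the compatible fibers straight and confirm that $G$-invariance of $\I$ is exactly what makes $z \cdot f$ land in sections of $s^*\I$ rather than merely $s^*\A$.
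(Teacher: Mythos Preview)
Your proposal is correct and follows essentially the same route as the paper's proof: both establish $\overline{\Zme_r \cdot \BI} \subseteq \Y_r$ by computing $z \cdot f$ on dense sections and using invariance of $\I$ to see the result lands in $\Gamma_c(X, s^*\I)$, and both obtain the reverse inclusion from the fullness/nondegeneracy of $\Y_r$ as a right $\BI$-module together with the embedding $\Y_r \hookrightarrow \Zme_r$. The paper phrases the second step as ``$\Y_r$ is an $\EI$--$\BI$-imprimitivity bimodule, hence $\Y_r = \overline{\Y_r \cdot \BI}$,'' which is exactly your Cohen-factorization observation.
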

	\begin{proof}
		Let $z \in \Zme_0$ and $g \in \Gamma_c(H, r^*\I) \subseteq \icrossh$. Then for $\gamma \in X$ we have
		\begin{equation}
		\label{eq:yraction}
			z \cdot g(\gamma) = \int_H \alpha_\eta\bigl( z(\gamma\eta)g(\eta^{-1}) \bigr) \, d\lambda_H^{s(\gamma)}(\eta).
		\end{equation}
		Note that $z(\gamma \eta) \in s^*\A_{\gamma \eta} = \A_{s(\eta)}$ and $g(\eta^{-1}) \in \I_{s(\eta)} \subseteq \A_{s(\eta)}$, so 
		$z(\gamma \eta)g(\eta^{-1})$ belongs to the ideal $\I_{s(\eta)}$ of $\A_{s(\eta)}$. It follows that the integrand in (\ref{eq:yraction}) 
		belongs to $\I_{r(\eta)} = \I_{s(\gamma)}$, since $I$ is an invariant ideal in $A$. Therefore, $z \cdot g(\gamma) \in \I_{s(\gamma)} = s^*\I_\gamma$, 
		so $z \cdot g \in \Gamma_c(X, s^*\I)$. It follows that $\Zme_0 \cdot \BI \subseteq \Y_0$, so $\overline{\Zme_r \cdot \BI} \subseteq \Y_r$.
		
		On the other hand, the computations above (together with those of Lemma \ref{lem:one}) show that the inclusion $\Y_r 
		\hookrightarrow \Zme_r$ is an embedding of right Hilbert $\BI$-modules. Since $\Y_r$ is a $\EI$-$\BI$-imprimitivity bimodule, 
		we have
		\[
			\Y_r = \overline{\Y_r \cdot \BI} \subseteq \overline{\Zme_r \cdot \BI}.
		\]
		Therefore, $\Y_r = \overline{\Zme_r \cdot \BI}$, and $\Y_r$ is the submodule of $\Zme_r$ associated to $\BI$ under the Rieffel 
		correspondence.
	\end{proof}
	
	\begin{lem}
		We have $\Y_r = \overline{\EI \cdot \Zme_r}$, so $\EI$ and $\Y_r$ are matched up under the Rieffel correspondence.
	\end{lem}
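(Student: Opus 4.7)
The plan is to mimic the strategy of the preceding lemma, but now working on the left side with the action of $\EI$ rather than the right action of $\BI$. First I would show the inclusion $\overline{\EI \cdot \Zme_r} \subseteq \Y_r$ by checking that $\Gamma_c(X^H, r^*(\rho^*\I))$ acts into $\Gamma_c(X, s^*\I)$. Specifically, take $f \in \Gamma_c(X^H, r^*(\rho^*\I))$ and $z \in \Zme_0$; using the formula from Proposition \ref{prop:imprimitivityDS},
\[
	f \cdot z(\gamma) = \int_G \alpha_\gamma^{-1}\bigl( f([\gamma, \eta]) \bigr) z(\eta) \, d\lambda_{s(\gamma)}(\eta).
\]
Since $f$ takes values in $\rho^*\I$, we have $f([\gamma, \eta]) \in \I_{r(\gamma)}$, and $G$-invariance of $I$ gives $\alpha_\gamma^{-1}(f([\gamma, \eta])) \in \I_{s(\gamma)}$. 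Multiplying by $z(\eta) \in \A_{s(\gamma)}$ and using that $\I_{s(\gamma)}$ is an ideal of $\A_{s(\gamma)}$, the integrand lies in $\I_{s(\gamma)}$, so $f \cdot z \in \Y_0$. This shows $\EI \cdot \Zme_0 \subseteq \Y_0$, and passing to completions yields $\overline{\EI \cdot \Zme_r} \subseteq \Y_r$.

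For the reverse inclusion, I would exploit the fact that $\Y_r$ is already a (full) left Hilbert $\EI$-module by Corollary \ref{cor:ReducedImprimitivityDS}; nondegeneracy of the left action gives $\Y_r = \overline{\EI \cdot \Y_r}$. Combining this with $\Y_r \subseteq \Zme_r$ from Lemma \ref{lem:one} produces
\[
	\Y_r = \overline{\EI \cdot \Y_r} \subseteq \overline{\EI \cdot \Zme_r},
\]
which closes the loop and gives equality. The Rieffel correspondence then identifies the closed submodule $\overline{\EI \cdot \Zme_r}$ as the one associated to the ideal $\EI$ of $\rhoacrossgh$.

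The main obstacle I anticipate is purely bookkeeping: one has to be careful that the formula for the left action of $f$ on $z$ really does preserve the subspace $\Gamma_c(X, s^*\I)$, which comes down to correctly tracking the fiber assignments in $\rho^*\A$ over $X^H$ (recalling $\rho([\gamma]) = r(\gamma)$, so $\rho^*\A_{[\gamma]} = \A_{r(\gamma)}$) and invoking $G$-invariance of $I$ at exactly the right spot. Once that is handled, the two inclusions are essentially immediate, and no new estimates on the reduced norm are required beyond what Lemma \ref{lem:one} already provides.
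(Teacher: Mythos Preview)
Your proposal is correct and follows essentially the same approach as the paper: both establish $\overline{\EI \cdot \Zme_r} \subseteq \Y_r$ by checking fiberwise that the left action of $\Gamma_c(X^H, r^*(\rho^*\I))$ on $\Zme_0$ lands in $\Y_0$, and then obtain the reverse inclusion from $\Y_r = \overline{\EI \cdot \Y_r} \subseteq \overline{\EI \cdot \Zme_r}$ using fullness of the left $\EI$-action on $\Y_r$. Your explicit invocation of $G$-invariance to push $\alpha_\gamma^{-1}(f([\gamma,\eta]))$ into $\I_{s(\gamma)}$ is a detail the paper leaves implicit, but otherwise the arguments are the same.
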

	\begin{proof}
		The proof is similar to that of the last lemma. Note that if $z \in \Zme_0$ and $f \in \Gamma_c(X^H, r^*(\rho^*\I)) \subseteq \EI$, then
		\begin{equation}
		\label{eq:eiaction}
			f \cdot z(\gamma) = \int_G \alpha_\gamma^{-1} \bigl( f([\gamma, \eta]) \bigr) z(\eta) \, d\lambda_{s(\gamma)}(\eta).
		\end{equation}
		Since $f([\gamma, \eta]) \in r^*(\rho^*\I)_{[\gamma, \eta]} = \rho^*\I_{[\gamma]} = \I_{r(\gamma)}$ and $z(\eta) \in s^*\A_\eta = 
		\A_{s(\eta)} = \A_{s(\gamma)}$, the integrand in (\ref{eq:eiaction}) belongs to $\I_{s(\gamma)}$. Thus $f \cdot z \in \Y_0$. In particular, 
		this shows that $\overline{\EI \cdot \Zme_r} \subseteq \Y_r$. On the other hand, taking $z \in \Y_0$ in (\ref{eq:eiaction}) yields the 
		same formula, so $\Y_r$ embeds into $\Zme_r$ as a right Hilbert $\EI$-submodule. Thus $\Y_r = \overline{\EI \cdot \Y_r} \subseteq 
		\overline{\EI \cdot \Zme_r}$, so $\Y_r = \overline{\EI \cdot \Zme_r}$. It follows that $\Y_r$ and $\EI$ are matched up under the Rieffel 
		correspondence.
	\end{proof}
	
By combining the previous three lemmas, we immediately obtain one half of our desired result:
\begin{prop}
	Under the the Morita equivalence of $\rhoacrossgh$ and $\acrossh$, the ideals $\rhoicrossgh$ and $\icrossh$ are paired by the Rieffel correspondence.
\end{prop}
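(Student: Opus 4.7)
The plan is to assemble the three preceding lemmas and invoke the Rieffel correspondence for the imprimitivity bimodule $\Zme_r$. Recall that for a $\rhoacrossgh - \acrossh$-imprimitivity bimodule $\Zme_r$, the Rieffel correspondence bijects ideals $\BI$ of $\acrossh$ with closed sub-bimodules $Y$ of $\Zme_r$ via $\BI \mapsto \overline{\Zme_r \cdot \BI}$, and likewise bijects ideals $\EI$ of $\rhoacrossgh$ with closed sub-bimodules via $\EI \mapsto \overline{\EI \cdot \Zme_r}$. The ideals $\BI$ and $\EI$ correspond to each other under the Rieffel correspondence precisely when $\overline{\Zme_r \cdot \BI} = \overline{\EI \cdot \Zme_r}$.

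First, the embedding lemma guarantees that $\Y_r$ sits naturally inside $\Zme_r$ as a closed sub-bimodule, so that one may speak unambiguously about $\Zme_r \cdot \BI$ and $\EI \cdot \Zme_r$ and compare them with $\Y_r$. Second, the identification $\Y_r = \overline{\Zme_r \cdot \BI}$ from the second lemma says that $\Y_r$ is exactly the closed sub-bimodule of $\Zme_r$ assigned to the ideal $\BI = \icrossh$ of $\acrossh$ under the Rieffel correspondence. Third, the identification $\Y_r = \overline{\EI \cdot \Zme_r}$ from the third lemma says that $\Y_r$ is exactly the closed sub-bimodule of $\Zme_r$ assigned to the ideal $\EI = \rhoicrossgh$ of $\rhoacrossgh$ under the Rieffel correspondence.

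Since the Rieffel correspondence is a bijection, the common assignment $\Y_r$ forces $\BI$ and $\EI$ to be paired. This is the content of the proposition, and the proof reduces to three lines citing the three lemmas. There is no real obstacle here: all the substantive work has been done in the preceding lemmas, where the integrand arguments were used to show the actions of $\BI$ and $\EI$ preserve the sub-bundle $s^*\I$ and that $\Y_r$ is filled up from both sides by these actions. The present proposition is simply the packaging statement.
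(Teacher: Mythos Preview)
Your proposal is correct and matches the paper's approach exactly: the paper states that this proposition follows immediately ``by combining the previous three lemmas,'' and your argument spells out precisely how the three lemmas feed into the Rieffel correspondence to yield the pairing of $\BI$ and $\EI$.
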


We now turn our attention to the ideals $\Jtwiddle = \ker(\rho^*q \rtimes \id)$ and $\J = \ker(q \rtimes \id)$. Let $\pi : (A/I)(\ho) \to B(\Hil)$ be a 
faithful separable representation. (Here we use $(A/I)(\ho)$ to denote the section algebra of 
the restricted bundle $\A \vert_{\ho}$.) We will frequently use the fact that $\pi$ can be viewed as a representation on $L^2(\ho*\frah, \mu)$ for some analytic Borel Hilbert
bundle $\ho * \frah$ and finite Borel measure $\mu$ on $\ho$. Thus $\pi$ has a direct integral decomposition
\[
	\pi = \int^\oplus_{\ho} \pi_u \, d\mu(u),
\]
where $\pi_u$ is a representation of $(\A/\I)_u$ on $\Hil(u)$.

Now put $\tilde{\pi} = \pi \circ q$. Then $\tilde{\pi}$ is a representation of $A(\ho)$ with kernel $I(\ho)$.  We can form the regular representation $\indhpi$ of $\amodicrossh$, which is faithful and acts on 
$\Zme^{A/I} \otimes_{A/I} \Hil$, where
\[
	\Zme^{A/I} = \overline{\Zme_0^{A/I}} = \overline{\Gamma_c(X, s^*(\A/\I))}
\]
is the usual $\fullrhoicrossgh - \fullicrossh$-imprimitivity bimodule. Then $(\indhpi) \circ (q \rtimes \id)$ is a representation of $\acrossh$ on 
$\Zme^{A/I} \otimes_{A/I} \Hil$ with kernel $\J$. On the other hand, we could form the representation $\indhpitilde$ of $\acrossh$, which acts on 
$\Zme \otimes_A \Hil$. Our goal then is to show that $\indhpitilde$ and $(\indhpi) \circ (q \rtimes \id)$ are unitarily equivalent, so that 
$\ker(\indhpitilde) = \J$.

\begin{lem}
\label{lem:UnitaryU}
	There is a unitary $U : \Zme \otimes_A \Hil \to \Zme^{A/I} \otimes_{A/I} \Hil$ characterized by
	\[
		U(f \otimes h) = s^*q(f) \otimes h
	\]
	for $f \in \Gamma_c(H, s^*\A)$ and $h \in \Hil$. Moreover, $U$ intertwines $\indhpitilde$ and $\bigl( \indhpi \bigr) \circ (q \rtimes \id)$.
\end{lem}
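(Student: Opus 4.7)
The plan is to define $U$ on the dense subspace $\Zme_0 \odot \Hil$ of elementary tensors via the given formula, and then check, in order, that it descends to the $A(\ho)$-balanced tensor product, preserves inner products, has dense range, and intertwines the two representations. The essential inputs throughout are that $q$ is a fiberwise $*$-homomorphism of upper semicontinuous $C^*$-bundles, that it is $\alpha$-equivariant in the sense $q \circ \alpha_\xi = \alpha_\xi^I \circ q$, and that $\tilde{\pi} = \pi \circ q$.

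First I would verify well-definedness of $U_0(f \otimes h) := s^*q(f) \otimes h$ on the balanced tensor product. For $a \in A(\ho)$, the fiberwise identity $s^*q(f \cdot a) = s^*q(f) \cdot q(a)$ is immediate from $(f \cdot a)(\xi) = f(\xi) a(s(\xi))$, so $U_0(f \cdot a \otimes h) = s^*q(f) \otimes \pi(q(a)) h = U_0(f \otimes \tilde{\pi}(a) h)$. Next, for isometry, I would compute the inner product of $s^*q(f) \otimes h$ with $s^*q(g) \otimes h'$ in $\Zme^{A/I} \otimes_{A/I} \Hil$: it evaluates, up to conventions, to $\ip{\pi(\langle s^*q(g), s^*q(f) \rangle_{(A/I)(\ho)}) h}{h'}$. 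The key identity $q(\langle g, f \rangle_{A(\ho)}) = \langle s^*q(g), s^*q(f) \rangle_{(A/I)(\ho)}$ follows by pulling $q$ inside the defining integral $\langle g, f \rangle(u) = \int g(\xi)^* f(\xi) \, d\lambda_u(\xi)$, using continuity of $q$ on each fiber; combining this with $\tilde{\pi} = \pi \circ q$ matches the corresponding inner product on $\Zme \otimes_A \Hil$.

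For density of the range I would use surjectivity of $s^*q : \Gamma_c(X, s^*\A) \to \Gamma_c(X, s^*(\A/\I))$; this is a standard u.s.c. bundle fact, proved via local lifts of sections (using the fiberwise surjectivity of $q$) glued by a partition of unity on $X$. Hence $U_0$ extends to a unitary $U$. For the intertwining property, it suffices to check $s^*q(F \cdot z) = r^*q(F) \cdot s^*q(z)$ for $F \in \Gamma_c(H, r^*\A)$ and $z \in \Zme_0$. I would pull $q$ through the convolution integral defining the left action and invoke the $\alpha$-equivariance identity $q \circ \alpha_\xi^{-1} = (\alpha_\xi^I)^{-1} \circ q$ to rewrite the integrand; multiplicativity of $q$ on fibers then yields the desired equality, and hence $U \circ \indhpitilde(F) = \indhpi((q \rtimes \id)(F)) \circ U$ on elementary tensors.

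The argument poses no deep obstruction; it is really a careful bookkeeping exercise. The subtlest technical point is the surjectivity of $s^*q$ at the level of compactly supported sections, but this is standard. The most involved calculation is the intertwining identity, though it is essentially forced once one exploits the equivariance of $q$ and the compatibility of $q$ with the bimodule operations.
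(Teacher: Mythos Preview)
Your proposal is correct and follows essentially the same route as the paper: both arguments reduce isometry to the identity $q\bigl(\hip{g}{f}_{A(\ho)}\bigr) = \hip{s^*q(g)}{s^*q(f)}_{(A/I)(\ho)}$ combined with $\tilde\pi = \pi\circ q$, and both reduce the intertwining to the computation $s^*q(F\cdot z) = (q\rtimes\id)(F)\cdot s^*q(z)$ by pulling $q$ through the convolution integral using equivariance. Your explicit balancing check is harmless but redundant (it is forced by the inner-product computation), and the paper simply asserts the surjectivity of $s^*q$ on compactly supported sections that you sketch more carefully.
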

\begin{proof}
	Recall that the inner product on $\Zme^{A/I} \otimes_{A/I} \Hil$ is characterized by
	\[
		\ip{f \otimes h}{g \otimes k} = \ip{\pi(\hip{g}{f}_{A/I})h}{k}
	\]
	for $f, g \in \Gamma_c(H, s^*(\A/\I))$ and $h, k \in \Hil$, where
	\[
		\hip{g}{f}_{A/I}(u) = \int_H g(\eta)^*f(\eta) \, d\lambda_u(\eta)
	\]
	for $u \in \ho$. Similarly, the inner product on $\Zme \otimes_{A} \Hil$ satisfies
	\[
		\ip{f \otimes h}{g \otimes k} = \ip{\tilde{\pi}(\hip{g}{f}_A)h}{k}
	\]
	for $f, g \in \Gamma_c(H, s^*\A)$ and $h, k \in \Hil$, with
	\[
		\hip{g}{f}_A(u) = \int_H g(\eta)^*f(\eta) \, d\lambda_u(\eta)
	\]
	for $u \in \ho$. Let $f \otimes h, g \otimes k \in \Gamma_c(H, s^*\A) \odot \Hil$. Then we have
	\begin{align*}
		&\ip{U(f \otimes h)}{U(g \otimes k)} = %\ip{s^*q(f) \otimes h}{s^*q(g) \otimes k} \\
%			 &\quad \quad=
			 \ip{\pi\bigl( \hip{s^*q(g)}{s^*q(f)}_{A/I}\bigr) h}{k} \\
			&\quad \quad= \int_\ho  \ip{\pi_u\bigl(\hip{s^*q(g)}{s^*q(f)}_{A/I}(u)\bigr) h(u)}{k(u)} \, d\mu(u) \\
			&\quad \quad= \int_\ho \ip{\pi_u \biggl( \int_H \bigl(s^*q(g)(\eta)\bigr)^* \bigl(s^*q(f)(\eta)\bigr) \, d\lambda_u(\eta) \biggr) h(u)}{k(u)} \, d\mu(u) \\
			&\quad \quad= \int_\ho \int_H \ip{\pi_u(q_{s(\eta)}(g(\eta))^* q_{s(\eta)}(f(\eta))) h(u)}{k(u)} \, d\lambda_u(\eta) d\mu(u) \\
			&\quad \quad= \int_H \ip{\pi_u \circ q_u(g(\eta)^*f(\eta)) h(u)}{k(u)} \, d\nu^{-1}(\eta).
	\end{align*}
	But $\pi_u \circ q_u = (\pi \circ q)_u = \tilde{\pi}_u$, so the above equation becomes
	\[
		\int_H \ip{\tilde{\pi}_{s(\eta)}(g(\eta)^*f(\eta))h(s(\eta))}{k(s(\eta))} \, d\nu^{-1}(\eta) = \ip{f \otimes h}{g \otimes k}
	\]
	in $\Zme \otimes_A \Hil$. It is easy to see that $U: \Zme_0 \odot \Hil \to \Zme_0^{A/I} \odot \Hil$ is surjective, so it is an isometry of $\Zme_0 \odot \Hil$ onto 
	$\Zme_0^{A/I} \odot \Hil$ that extends to a unitary $U : \Zme \otimes \Hil \to \Zme^{A/I} \otimes \Hil$.
	
	Now let $f \in \Gamma_c(H, r^*\A)$, $g \in \Gamma_c(H, s^*\A)$, and $h \in \Hil$. Then we have
		\begin{align*}
			\bigl( \indhpi \bigr) \circ (q \rtimes \id)(f) U(g \otimes h) &= \indhpi\bigl(q \rtimes \id(f) \bigr)(s^*q(g) \otimes h) \\
				&= q \rtimes \id(f) \cdot s^*q(g) \otimes h.
		\end{align*}
		Now observe that
		\begin{align*}
			q \rtimes \id(f) \cdot s^*q(g) &= \int_H \alpha_\gamma^{-1}\bigl( q \rtimes \id(f)(\eta) \bigr)s^*q(g)(\eta^{-1}\gamma) \, 
					d\lambda^{r(\gamma)}(\eta) \\
				&= \int_H \alpha_\gamma^{-1} \bigl( q_{r(\eta)}(f(\eta)) \bigr) q_{s(\eta^{-1}\gamma)}(g(\eta^{-1}\gamma)) \, d\lambda^{r(\gamma)}(\eta) \\
				&= \int_H q_{s(\gamma)} \bigl( \alpha_\gamma^{-1}(f(\eta)) g(\eta^{-1} \gamma) \bigr) \, d\lambda^{r(\gamma)}(\eta) \\
				&= q_{s(\gamma)} \biggl( \int_H \alpha_\gamma^{-1}(f(\eta)) g(\eta^{-1} \gamma) \, d\lambda^{r(\gamma)}(\eta) \biggr) \\
%				&= q_{s(\gamma)}\bigl( f \cdot g(\gamma) \bigr) \\
				&= s^*q(f \cdot g)(\gamma).
		\end{align*}
		Thus
		\begin{align*}
			\bigl( \indhpi \bigr) \circ (q \rtimes \id)(f) U(g \otimes h) &= s^*q(f \cdot g) \otimes h \\
				&= U(f \cdot g \otimes h) \\
				&= U \bigl( \indhpitilde(f)(g \otimes h) \bigr),
		\end{align*}
		so $U$ intertwines the two representations.
\end{proof}
	
%	\begin{lem}
%		The unitary $U$ intertwines $\indhpitilde$ and $\bigl( \indhpi \bigr) \circ (q \rtimes \id)$.
%	\end{lem}
%	\begin{proof}
%		Let $f \in \Gamma_c(H, r^*\A)$, $g \in \Gamma_c(H, s^*\A)$, and $h \in \Hil$. Then we have
%		\begin{align*}
%			\bigl( \indhpi \bigr) \circ (q \rtimes \id)(f) U(g \otimes h) &= \indhpi\bigl(q \rtimes \id(f) \bigr)(s^*q(g) \otimes h) \\
%				&= q \rtimes \id(f) \cdot s^*q(g) \otimes h.
%		\end{align*}
%		Now observe that
%		\begin{align*}
%			q \rtimes \id(f) \cdot s^*q(g) &= \int_H \alpha_\gamma^{-1}\bigl( q \rtimes \id(f)(\eta) \bigr)s^*q(g)(\eta^{-1}\gamma) \, 
%					d\lambda^{r(\gamma)}(\eta) \\
%				&= \int_H \alpha_\gamma^{-1} \bigl( q_{r(\eta)}(f(\eta)) \bigr) q_{s(\eta^{-1}\gamma}(g(\eta^{-1}\gamma)) \, d\lambda^{r(\gamma)}(\eta) \\
%				&= \int_H q_{s(\gamma)} \bigl( \alpha_\gamma^{-1}(f(\eta)) g(\eta^{-1} \gamma) \bigr) \, d\lambda^{r(\gamma)}(\eta) \\
%				&= q_{s(\gamma)} \biggl( \int_H \alpha_\gamma^{-1}(f(\eta)) g(\eta^{-1} \gamma) \, d\lambda^{r(\gamma)}(\eta) \biggr) \\
%				&= q_{s(\gamma)}\bigl( f \cdot g(\gamma) \bigr) \\
%				&= s^*q(f \cdot g)(\gamma).
%		\end{align*}
%		Thus
%		\begin{align*}
%			\bigl( \indhpi \bigr) \circ (q \rtimes \id)(f) U(g \otimes h) &= s^*q(f \cdot g) \otimes h \\
%				&= U(f \cdot g \otimes h) \\
%				&= U \cdot \indhpitilde(f)(g \otimes h),
%		\end{align*}
%		so $U$ intertwines the two representations.
%	\end{proof}
	
Now let $\Zme_r^{A/I}$ denote the $\rhoamodicrossgh-\amodicrossh$-imprimitivity
bimodule obtained by completing $\Gamma_c(X, s^*(\A/\I))$.
% and let $\Zme_r^A$ be the $\rhoacrossgh-\acrossh$-imprimitivity bimodule 
%obtained from $\Gamma_c(X, s^*\A)$. 
We can use this module, along with $\Zme_r$, to induce representations
\[
	\tau = \Zme_r^{A/I} - {\rm{Ind}} \bigl( \indhpi \bigr)
\]
and
\[
	\tilde{\tau} = \Zme_r - {\rm{Ind}} \bigl( \indhpitilde \bigr)
\]
of $\rhoamodicrossgh$ and $\rhoacrossgh$, respectively. We aim to show that $\tau \circ (\rho^*q \rtimes \id)$ is unitarily equivalent to 
$\tilde{\pi}$, which will imply that $\tilde{\tau}$ has kernel $\Jtwiddle$. Note that $\tau$ acts on the Hilbert space
\[
	\Zme_r^{A/I} \otimes_{\A/\I \rtimes H} \bigl( \Zme^{A/I} \otimes_{A/I} \Hil) = \overline{\Gamma_c(X, s^*(\A/\I)) \odot \Zme^{A/I} \odot \Hil},
\]
while $\tilde{\tau}$ acts on
\[
	\Zme_r \otimes_{\A \rtimes H} \bigl(\Zme \otimes_A \Hil) = \overline{\Gamma_c(X, s^*\A) \odot \Zme \odot \Hil}.
\]
Therefore, we need to find a unitary $V : \Zme_r \otimes (\Zme \otimes \Hil) \to \Zme_r^{A/I} \otimes (\Zme^{A/I} \otimes \Hil)$ that 
intertwines $\tau$ and $\tilde{\tau}$.
	
\begin{lem}
\label{lem:tauintertwine}
	Define $V : \Gamma_c(X, s^*\A) \odot (\Zme \otimes_A \Hil) \to \Gamma_c(X, s^*(\A/\I)) \odot (\Zme^{A/I} \otimes_{A/I} \Hil)$ on
	elementary tensors by
	\[
		V(f \otimes z) = s^*q(f) \otimes U(z),
	\]
	where $U : \Zme \otimes_A \Hil \to \Zme^{A/I} \otimes_{A/I} \Hil$ is the unitary from Lemma \ref{lem:UnitaryU} Then $V$ extends 
	to a unitary $V : \Zme_r \otimes_{\A \rtimes H} (\Zme \otimes_A \Hil) \to \Zme_r^{A/I} \otimes_{\A/\I \rtimes H} (\Zme^{A/I} \otimes_{A/I} 
	\Hil)$ intertwining $\tilde{\tau}$ and $\tau$.
\end{lem}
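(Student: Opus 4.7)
The plan is to split the proof into two parts: first that $V$ extends to a unitary on the completions, and second that it implements the desired intertwining (which must be read as $V \circ \tilde{\tau}(f) = \tau(\rho^*q \rtimes \id(f)) \circ V$, since that is the equivalence being set up in the preceding discussion).

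For the unitary extension, I would verify that $V$ preserves the balanced inner product on elementary tensors. Recall that for an imprimitivity bimodule $\Zme_r$ and a Hilbert space $\Hil_0$ with a left action of $\A \rtimes_{\alpha,r} H$, the internal tensor product inner product satisfies $\ip{f \otimes \xi}{g \otimes \eta} = \ip{\hip{g}{f}_{\A \rtimes H} \cdot \xi}{\eta}$. Applying this on both sides, the core computation reduces to two facts: (i) the identity
\[
    \hip{s^*q(g)}{s^*q(f)}_{\A/\I \rtimes H} = (q \rtimes \id)\bigl( \hip{g}{f}_{\A \rtimes H} \bigr),
\]
which follows by inserting the definitions of the $\A \rtimes H$-valued inner product from Proposition \ref{prop:imprimitivityDS} and using that $q_u$ is a $*$-homomorphism on each fibre; and (ii) the intertwining property of the unitary $U$ supplied by Lemma \ref{lem:UnitaryU}. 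Together these give the isometry condition on the algebraic tensor product. Surjectivity of the extension follows from surjectivity of $U$ together with the fact that $s^*q : \Gamma_c(X, s^*\A) \to \Gamma_c(X, s^*(\A/\I))$ has dense range (for instance, write a generator $F \otimes a$ of $C_c(X) \odot A/I$ as $s^*q(F \otimes \tilde{a})$ for any lift $\tilde{a}$, and invoke density of these simple tensors).

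For the intertwining step, compute on elementary tensors $g \otimes z$ with $g \in \Gamma_c(X, s^*\A)$ and $z \in \Zme \otimes_A \Hil$, and with $f \in \Gamma_c(X^H, r^*(\rho^*\A))$. On one side,
\[
    V \bigl( \tilde{\tau}(f)(g \otimes z) \bigr) = V(f \cdot g \otimes z) = s^*q(f \cdot g) \otimes U(z),
\]
while on the other
\[
    \tau\bigl( \rho^*q \rtimes \id(f) \bigr) V(g \otimes z) = \bigl( (\rho^*q \rtimes \id)(f) \cdot s^*q(g) \bigr) \otimes U(z).
\]
So the matter reduces to the pointwise identity $s^*q(f \cdot g)(\gamma) = \bigl( (\rho^*q \rtimes \id)(f) \cdot s^*q(g) \bigr)(\gamma)$. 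Using the formula for the left $X^H$-action on $\Zme_0$ from Proposition \ref{prop:imprimitivityDS}, pulling $q_{s(\gamma)}$ through the integral and through $\alpha_\gamma^{-1}$ (which is permissible because $I$ is $G$-invariant, so $\alpha_\gamma^{-1}$ descends to the quotient and commutes with $q$), one obtains exactly this equality — an argument entirely parallel to the one carried out in the proof of Lemma \ref{lem:UnitaryU}.

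I do not expect any real obstacle here: the proof is bookkeeping once Lemma \ref{lem:UnitaryU} is in hand. The only point requiring any care is checking that the identifications of fibres are handled consistently when passing from $\rho^*(\A/\I)$-valued expressions on $X^H$ to $(\A/\I)$-valued expressions on $X$ via the canonical isomorphisms $\rho^*\A_{[\gamma]} \cong \A_{r(\gamma)}$, so that the application of $q$ at the correct unit is unambiguous. Once the isometry property is verified on algebraic tensors and the intertwining is verified on a dense subset, both extend by continuity to the completions.
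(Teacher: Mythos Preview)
Your proposal is correct and follows essentially the same route as the paper: first establish the inner-product identity $\hip{s^*q(g)}{s^*q(f)}_{\A/\I \rtimes H} = (q \rtimes \id)\bigl(\hip{g}{f}_{\A \rtimes H}\bigr)$ and combine it with the intertwining property of $U$ to get the isometry, then verify $(\rho^*q \rtimes \id)(f) \cdot s^*q(g) = s^*q(f \cdot g)$ pointwise to obtain the intertwining. If anything, your treatment of surjectivity (via dense range of $s^*q$) is slightly more explicit than the paper's.
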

\begin{proof}
	Let $f, g \in \Gamma_c(X, s^*\A)$ and $z, w \in \Zme \otimes_A \Hil$. Then
	\begin{align*}
		\ip{V(f \otimes z}{V(g \otimes w)} &= \ip{s^*q(f) \otimes U(z)}{s^*q(g) \otimes U(w)} \\
			&= \ip{\indhpi\bigl( \hip{s^*q(g)}{s^*q(f)}_{A/I \rtimes H} \bigr) U(z)}{U(w)}.
	\end{align*}
	Now note that
	\begin{align*}
		\hip{s^*q(g)}{s^*q(f)}_{A/I \rtimes H}(\eta) &= \int_G s^*q(g)(\xi \eta^{-1})^* \alpha_\eta^I\bigl(s^*q(f)(\xi) \bigr) \, d\lambda_{s(\eta)}(\xi) \\
			&= \int_G q_{s(\xi\eta^{-1})} \bigl( g(\xi\eta^{-1}) \bigr)^* \alpha_\eta^I \bigl( q_{s(\eta)}(f(\xi)) \bigr) \, d\lambda_{s(\eta)}(\xi) \\
			&= \int_G q_{r(\eta)} \bigl(g(\xi \eta^{-1})^* \bigr) q_{r(\eta)} \bigl(\alpha_\eta(f(\xi)) \bigr) \, d\lambda_{s(\eta)}(\xi) \\
%			&= q_{r(\eta)} \biggl( \int_G g(\xi\eta^{-1})^* \alpha_\eta(f(\xi)) \, d\lambda_{s(\eta)}(\xi) \biggr) \\
			&= q_{r(\eta)} \bigl( \hip{g}{f}_{A \rtimes H}(\eta) \bigr) \\
			&= q \rtimes \id \bigl( \hip{g}{f}_{A \rtimes H} \bigr)(\eta).
	\end{align*}
	Therefore,
	\begin{align*}
		\ip{V(f \otimes z)}{V(g \otimes w)} &= \ip{\bigl( \indhpi \bigr) \circ (q \rtimes \id) \bigl( \hip{g}{f}_{A \rtimes H} \bigr) U(z)}{U(w)} \\
			&= \ip{U \cdot \indhpitilde \bigl( \hip{g}{f}_{A \rtimes H} \bigr) z}{U(w)} \\
			&= \ip{\indhpitilde \bigl( \hip{g}{f}_{A \rtimes H} \bigr)z}{w} \\
			&= \ip{f \otimes z}{g \otimes w}.
	\end{align*}
	Thus $V$ defines an isometry of $\Gamma_c(X, s^*\A) \odot (\Zme \otimes_A \Hil)$ onto $\Gamma_c(X, s^*(\A/\I)) \odot (\Zme^{A/I}
	\otimes_{A/I} \Hil)$, which then extends to a unitary $V : \Zme_r \otimes_{\A \rtimes H} (\Zme \otimes_A \Hil) \to \Zme_r^{A/I} 
	\otimes_{\A/\I \rtimes H} (\Zme^{A/I} \otimes_{A/I} \Hil)$.

	Now let $g \in \Gamma_c(G^H, r^*(\rho^*\A))$, $f \in \Gamma_c(X, s^*\A)$, and $z \in \Zme$. Then
	\begin{align*}
		\tau \circ (\rho^*q \rtimes \id)(g) V(f \otimes z) &= \tau \bigl( (\rho^*q \rtimes \id)(g) \bigr) (s^*q(f) \otimes U(z)) \\
			&= \bigl( (\rho^*q \rtimes \id)(g) \cdot q(f) \bigr) \otimes U(z).
	\end{align*}
	Observe that
	\begin{align*}
		(\rho^*q \rtimes \id)(q) \cdot s^*q(f)(\gamma) &= \int_G \alpha_\gamma^{-1} \bigl((\rho^*q \rtimes \id)(g)([\gamma, \eta]) \bigr) 
			s^*q(f)(\eta) \, d\lambda_{s(\gamma)}(\eta) \\
%			&= \int_G \alpha_\gamma^{-1} \bigl( \rho^*q_{r([\gamma, \eta])} (g([\gamma, \eta])) \bigr) q_{s(\eta)}(f(\eta)) \, 
%				d\lambda_{s(\gamma)}(\eta) \\
			&= \int_G \alpha_\gamma^{-1} \bigl( q_{r(\gamma)}(g([\gamma, \eta])) \bigr) q_{s(\gamma)}(f(\eta)) \, 
				d\lambda_{s(\gamma)}(\eta) \\
			&=  q_{s(\gamma)} \biggl( \int_G \alpha_\gamma^{-1} \bigl( g([\gamma, \eta]) \bigr) f(\eta) \, d\lambda_{s(\gamma)}(\eta) \biggr) \\
%			&= q_{s(\gamma)} (g \cdot f(\gamma)) \\
			&= s^*q(g \cdot f)(\gamma).
	\end{align*}
	Therefore,
	\begin{align*}
		\tau \circ (\rho^*q \rtimes \id)(g) V(f \otimes z) &= s^*q(g \cdot f) \otimes U(z) \\
			&= V(g \cdot f \otimes z) \\
			&= V\bigl( \tilde{\tau}(g)(f \otimes z) \bigr),
	\end{align*}
	so $V$ intertwines $\tau$ and $\tilde{\tau}$.
\end{proof}
	
\begin{proof}[Proof of Theorem \ref{thm:kwexact}]
	Lemma \ref{lem:tauintertwine} implies that $\ker(\tilde{\tau}) = \ker(\tau) = \Jtwiddle$. By definition, $\tilde{\tau}$ is induced from 
	$\indhpitilde$ via $\Zme_r$, and $\ker(\indhpitilde) = \J$. Therefore, the ideals $\Jtwiddle$ and $\J$ are matched up under the Rieffel 
	correspondence. Since $\EI$ and $\BI$ also correspond, it follows that $\EI = \Jtwiddle$ if and only if $\BI = \J$. That is, the first sequence 
	in Theorem \ref{thm:kwexact} is exact if and only if the second is exact.
\end{proof}
	
\section{Equivalence and Exactness}
With Theorem \ref{thm:kwexact} in hand, we can proceed with proving the main result. Let $G$ and $H$ be locally compact Hausdorff 
groupoids with Haar systems $\{\lambda^u\}_{u \in \go}$ and $\{\beta^v\}_{v \in \ho}$, $Z$ a $(G, H)$-equivalence, and $L$ the associated 
linking groupoid. 
%We aim to show that if $G$ is exact, then we can use the linking groupoid to ``transfer'' this property to $H$. This is 
%inspired by Katsura's proof that exactness for $C^*$-algebras is preserved under Morita equivalence. Our plan is to first 
We will use Theorem \ref{thm:kwexact} to show that if $G$ is exact, then $L$ is exact. We will then show that the exactness of $L$ descends to $H$, again 
using Theorem \ref{thm:kwexact}.
%We begin with the first step above. %To proceed, we will adapt Proposition \ref{prop:ImprimitivityGpoidAction} to our current setting. 

Let $(\B, L, \beta)$ be a separable dynamical system. We view $G$ as a closed subgroupoid of $L$ with Haar system, and we let 
$(\A, G, \alpha)$ denote the restriction of $(\B, L, \beta)$ to $G$, i.e., $\A = \B \vert_{\go}$ and $\alpha = \beta \vert_G$. If we define 
\[
	X = s^{-1}(\go) = G \sqcup \Zop,
\]
then we know that $G$ acts freely and properly on the right of $X$. If we let $X^G$ be the associated imprimitivity groupoid, then $X$ 
is an $(X^G, G)$-equivalence. As in the previous section, we define $\rho: X/G \to \lo$ by
$\rho([\gamma]) = r(\gamma)$.
We'll see that in this particular case, the imprimitivity groupoid $X^G$ can be identified naturally with $L$. 

\begin{prop}
	The space $X$ is an $(L, G)$-equivalence with respect to the obvious left and right actions of $L$ and $G$.
\end{prop}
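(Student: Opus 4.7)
The plan is to verify the five conditions of Definition \ref{defn:gpoidequiv} directly, with the left $L$-action and right $G$-action on $X$ taken to be the restrictions of multiplication in $L$.

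First I would observe that $X = s^{-1}(\go) = G \sqcup \Zop$, which is immediate from inspection of the source map on each piece of $L = G \sqcup Z \sqcup \Zop \sqcup H$. Both actions are well-defined on $X$: if $(\ell, x) \in \ltwo$ with $x \in X$ then $s(\ell x) = s(x) \in \go$, so $\ell x \in X$, and similarly $xg \in X$ for $x \in X$ and $g \in G$ with $s(x) = r(g)$. The two actions commute by associativity of multiplication in $L$.

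Next I would check that both actions are free and proper, using general facts about multiplication in a locally compact groupoid. Freeness is immediate: $\ell x = x$ forces $\ell = xx^{-1} = r(x) \in \lo$, and symmetrically for the right action. For properness of the right $G$-action, given compact $K_1, K_2 \subseteq X$, any net $(x_n, g_n)$ with $x_n \in K_2$ and $x_n g_n \in K_1$ has a subnet along which $x_n \to x$ and $x_n g_n \to y$; then $g_n = x_n^{-1}(x_n g_n) \to x^{-1} y$ by continuity of inversion and multiplication in $L$, and closedness of $G$ in $L$ places the limit in $G$. Properness of the left $L$-action is handled identically.

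The main work is in establishing the homeomorphisms $X/G \cong \lo$ and $L \backslash X \cong \go$ induced by $r_X = r_L\vert_X$ and $s_X = s_L\vert_X$. Both maps are continuous, open, and surjective (inherited from the range and source maps on $L$, which are open since $L$ has a Haar system), so it suffices to show their fibers coincide with the orbits of the respective actions. For the $G$-action this requires a case split on $X = G \sqcup \Zop$: fibers of $r_X$ over $u \in \go$ are $G^u$, which is one right $G$-orbit; fibers over $v \in \ho$ are $\{\overline{z} : s_Z(z) = v\}$, and the identity $\overline{z_1} \cdot g = \overline{g^{-1} \cdot z_1}$ shows these form a single $G$-orbit precisely because the $(G,H)$-equivalence structure on $Z$ provides a homeomorphism $G \backslash Z \cong \ho$ via $s_Z$. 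For the $L$-action, any $x, y \in X$ with $s(x) = s(y)$ satisfy $y = (yx^{-1}) x$ with $yx^{-1} \in L$, so $L$-orbits coincide with $s_X$-fibers. The main obstacle is expected to be this orbit-fiber comparison on the $\Zop$ piece of $X$, where the equivalence structure of $Z$ really gets used; the rest is formal from the groupoid axioms.
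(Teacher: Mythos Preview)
Your proof is correct, but it proceeds by a different route than the paper's. The paper observes that $\go$ is a closed subset of $\lo$ meeting every $L$-orbit and that $G$ is precisely the reduction $L\vert_{\go}$; it then invokes a general fact (Example~5.33(7) of \cite{muhly}) asserting that in this situation $X=s^{-1}(\go)$ is automatically an $(L,G)$-equivalence, provided only that $r$ and $s$ restrict to open maps on $X$. The sole verification left is that openness, which follows because $X$ is open in $L$ (since $\go$ is clopen in $\lo$) and $r,s$ are open on $L$.

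You instead verify the five conditions of Definition~\ref{defn:gpoidequiv} directly. This is more work but entirely self-contained: you never need the external reference, and the argument makes transparent exactly where the $(G,H)$-equivalence hypothesis on $Z$ enters (namely, in identifying the right $G$-orbits in $\Zop$ with the $r_X$-fibers over $\ho$, via the homeomorphism $G\backslash Z\cong\ho$). One small point worth making explicit in your write-up: when you say the structure maps are open ``inherited from'' $r,s$ on $L$, the justification is precisely that $X$ is open in $L$, so the restriction of an open map to an open subset remains open---this is the same observation the paper singles out.
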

\begin{proof}
	This is a special case of Example 5.33(7) from \cite{muhly}. Note that $\go \subseteq \lo$ is a closed subset that meets every orbit, and that $G$ is nothing
	more than the reduction $L \vert_{\go}$. Thus $X$ is an $(L, G)$-equivalence as long as the restrictions of $r$ and $s$ to $X$ are open. But this follows
	immediately from the fact that $r, s: L \to \lo$ are open and $X$ is open in $L$.
\end{proof}
	
%	\begin{lem}
%		The map $\rho : X/G \to \lo$ given by
%		\[
%			\rho(\gamma \cdot G) = r(\gamma)
%		\]
%		is a homeomorphism. Consequently, we can identify the pullback bundle $\rho^*\B$ and pullback $C^*$-algebra $\rho^*B$ with $\B$ and $B$, 
%		respectively. 
%	\end{lem}
%	\begin{proof}
%		Since $X = G \sqcup \Zop$, we really have
%		\[
%			X/G = G/G \sqcup \Zop/G.
%		\]
%		Therefore, it will suffice to see that $\rho$ restricts homeomorphisms of $G/G$ onto $\go$ and $\Zop/G$ onto $\ho$.
%		
%		Since $\Zop$ is a $(H, G)$-equivalence, the structure map $r_{\Zop} : \Zop \to \ho$ induces a homeomorphism of $\Zop/G$ onto $\ho$, given by
%		\[
%			\bar{z} \cdot G \mapsto r_{\Zop}(\bar{z}).
%		\]
%		Since the range map of $L$ restricted to $\Zop$ agrees with $r_{\Zop}$, the homeomorphism above is simply the restriction of $\rho$ to $\Zop/G$.
%		Similarly, we can view $G$ as a $(G, G)$-equivalence, and the map $r : G \to \go$ induces a homeomorphism from $G/G \to \go$, defined by
%		\[
%			\gamma \cdot G \mapsto r(\gamma),
%		\]
%		which is also the restriction of $\rho$ to $G/G$. Thus $\rho$ implements a homeomorphism of $X/G$ onto $\lo = \go \sqcup \ho$.
%		
%		We have therefore identified $X/G$ with $\lo$ by associating the orbit $\gamma \cdot G$ to the unit $r(\gamma)$. It is then obvious that we can 
%		identify the pullback bundle $\rho^*\B$ with $\B$, and thus $\rho^*B$ with $B$ as well.
%	\end{proof}
	
	It is a well-known fact \cite[\S 2]{MRW} that any groupoid equivalence induces a natural isomorphism with the appropriate imprimitivity groupoid. Thus we have the
	following immediate corollary.
	
	\begin{cor}
	\label{cor:impxg}
		Let $X^G$ be the imprimitivity groupoid associated to the right $G$-space $X$. Then $X^G$ is naturally isomorphic to $L$.
	\end{cor}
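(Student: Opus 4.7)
The plan is to invoke the general fact already recorded in the excerpt (cf.\ equation~\eqref{eq:gzw} and \cite[\S 2]{MRW}): whenever $X$ is a $(K, G)$-equivalence of groupoids, the imprimitivity groupoid $X^G$ is canonically isomorphic to $K$ via
\[
    [x, y]_G \mapsto {_K[}x, y],
\]
where ${_K[}x, y]$ is the unique element of $K$ satisfying ${_K[}x, y] \cdot y = x$. Since the preceding proposition has just established that our $X = s^{-1}(\go) = G \sqcup \Zop$ is an $(L, G)$-equivalence in the obvious way, this principle applies with $K = L$ and yields $X^G \cong L$ immediately.

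For concreteness, the resulting isomorphism $\Phi : X^G \to L$ splits into four cases according to the decomposition $X = G \sqcup \Zop$, each of which can be read off against the multiplication rules on $L$ set up in Section~2. When $x, y \in G$ one has $\Phi([x, y]_G) = xy^{-1} \in G$; when $x \in G$ and $y = \overline{w} \in \Zop$ one has $\Phi([x, \overline{w}]_G) = x \cdot w \in Z$; when $x = \overline{z} \in \Zop$ and $y \in G$ one has $\Phi([\overline{z}, y]_G) = \overline{y \cdot z} \in \Zop$; and when $x = \overline{z}, y = \overline{w} \in \Zop$ one has $\Phi([\overline{z}, \overline{w}]_G) \in H$ via the $H$-valued product $\overline{z}\, w$ defined in the linking groupoid.

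Well-definedness, bijectivity, continuity, and compatibility with the groupoid structure are all part of the cited general result, so I do not anticipate any substantive obstacle. This is a genuinely immediate corollary to the proposition that precedes it, and the written proof should amount to little more than a one-sentence appeal to \cite[\S 2]{MRW}.
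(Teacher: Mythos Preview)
Your proposal is correct and follows exactly the same approach as the paper: the paper states this corollary with no proof beyond the sentence ``It is a well-known fact \cite[\S 2]{MRW} that any groupoid equivalence induces a natural isomorphism with the appropriate imprimitivity groupoid,'' which is precisely the principle you invoke. Your explicit case-by-case description of $\Phi$ is a bonus (and consistent with the formula $[\gamma,\eta]\mapsto \gamma\eta^{-1}$ the paper uses in the subsequent corollary), but the underlying argument is identical.
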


In particular, note that Corollary \ref{cor:impxg} gives an identification of $X/G$ with $\lo$ via the map $\rho$.

\begin{cor}
	Under the homeomorphism $\rho : X/G \to \lo$, we can identify $\rho^*\B$ with $\B$. Also, under this identification and the 
	isomorphism $X^G \cong L$, the action of $X^G$ on $\rho^*\B$ agrees with that of $L$ on $\B$.
\end{cor}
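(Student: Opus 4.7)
The proof is essentially a matter of chasing through identifications, so my plan is to verify the two claims in the statement by unwinding definitions and showing that the natural candidates agree.

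First, I would handle the identification of bundles. Since $\rho : X/G \to \lo$ is a homeomorphism (by Corollary \ref{cor:impxg}), the pullback bundle $\rho^*\B$ over $X/G$ has fiber $(\rho^*\B)_{[\gamma]} = \B_{\rho([\gamma])} = \B_{r(\gamma)}$ at each point $[\gamma] \in X/G$. Pulling back a bundle along a homeomorphism gives a canonically isomorphic bundle; the section $[\gamma] \mapsto b(r(\gamma))$ corresponding to $b \in \Gamma_0(\lo, \B)$ establishes the identification $\rho^*\B \cong \B$ compatibly with the upper semicontinuous $C^*$-bundle structures. This step requires nothing beyond the general yoga of pullbacks along homeomorphisms.

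Next, I would verify the compatibility of the two actions. Recall that the isomorphism $X^G \cong L$ from Corollary \ref{cor:impxg} is given, as in \eqref{eq:gzw}, by $[\gamma, \eta] \mapsto {_L[}\gamma, \eta]$, where ${_L[}\gamma, \eta]$ is the unique element of $L$ with ${_L[}\gamma, \eta] \cdot \eta = \gamma$. In particular, since $\eta^{-1}$ makes sense in $L$ (as $X = s^{-1}(\go)$ is closed under inversion inside $L$), we have ${_L[}\gamma, \eta] = \gamma \eta^{-1}$, where the product is taken in $L$. On the other hand, by the general definition of the $X^G$-action on $\rho^*\B$, the map $\sigma_{[\gamma, \eta]} : (\rho^*\B)_{[\eta]} \to (\rho^*\B)_{[\gamma]}$ is given by $\sigma_{[\gamma, \eta]}(b) = \beta_{\gamma \eta^{-1}}(b)$ for $b \in \B_{r(\eta)}$. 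Under the identifications above, both $\sigma_{[\gamma, \eta]}$ and the action of ${_L[}\gamma, \eta] \in L$ on $\B$ are therefore equal to $\beta_{\gamma \eta^{-1}}$, as required.

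The only step with any substance is confirming the explicit form ${_L[}\gamma, \eta] = \gamma \eta^{-1}$ of the imprimitivity isomorphism in this particular context, and verifying that $(\gamma\eta^{-1}) \cdot \eta = \gamma$ in $L$, which is immediate from associativity in the linking groupoid together with $\eta^{-1}\eta = s(\eta) \in \lo$ acting as a unit. Continuity of the identifications is inherited from the continuity of the bundle projection, of $\rho$, and of the isomorphism $X^G \cong L$, so no separate argument is needed. I do not anticipate any genuine obstacle here; the corollary is essentially bookkeeping that sets up the notation for the main theorem in the next section.
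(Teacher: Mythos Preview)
Your proposal is correct and follows essentially the same approach as the paper: identify the pullback along the homeomorphism $\rho$ with $\B$ fiberwise, observe that the isomorphism $X^G\cong L$ sends $[\gamma,\eta]$ to $\gamma\eta^{-1}$, and check that $\sigma_{[\gamma,\eta]}=\beta_{\gamma\eta^{-1}}$. One small slip: your parenthetical that $X=s^{-1}(\go)$ is closed under inversion in $L$ is false (e.g.\ $\bar z\in\Zop\subseteq X$ has $\bar z^{-1}=z\in Z\not\subseteq X$), but this is irrelevant since all you need is that $\eta^{-1}$ exists in the groupoid $L$.
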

\begin{proof}
	We have identified $X/G$ with $\lo$ by associating the orbit $[\gamma]$ to the unit $r(\gamma)$. It is then obvious that we can 
	identify the pullback bundle $\rho^*\B$ with $\B$, and thus $\rho^*B$ with $B$ as well.

	For any $\gamma, \eta \in X$ with $s(\gamma) = s(\eta)$, the isomorphism $\Phi : X^G \to L$ takes $[\gamma, \eta] \in L^G$ to 
	$\gamma \eta^{-1} \in L$. Recall that the action of $X^G$ on $\rho^*\B$ is given by a family $\sigma = \{\sigma_{[\gamma, \eta]}\}$, 
	where $\sigma_{[\gamma, \eta]} : \rho^*\B_{s([\gamma, \eta])} \to \rho^*\B_{r([\gamma, \eta])}$ is defined by
	\[
		\sigma_{[\gamma, \eta]}(a) = \alpha_{\gamma \eta^{-1}}(a).
	\]
	Here we have identified $\rho^*\B_{s([\gamma, \eta])}$ with $\B_{r(\eta)}$ and $\rho^*\B_{r([\gamma, \eta])}$ with $\B_{r(\gamma)}$. 
	But $\alpha_{\gamma \eta^{-1}}(a) = \alpha_{\Phi([\gamma, \eta])}(a)$, so the actions match up.
\end{proof}

We could replace $G$ by $H$ in the preceding situation, and everything would work equally well. That is, we could define the principal 
right $H$-space
\[
	Y = s^{-1}(\ho) = Z \sqcup H,
\]
and let $Y^H$ be the associated imprimitivity groupoid. We let $\psi : Y/H \to \lo$ denote the map
$\psi([\gamma]) = r(\gamma)$.
Then all of our previous results hold, and the proofs are nearly identical in this situation. To summarize:

\begin{prop}
	The space $Y$ is an $(L, H)$-equivalence, and consequently $Y^H$ is naturally isomorphic to $L$. Furthermore, we can identify 
	$\psi^*\B$ with $\B$, and the actions of $Y^H$ on $\psi^*\B$ and $L$ on $\B$ agree under this identification.
\end{prop}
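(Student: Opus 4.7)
The plan is to mimic verbatim the arguments just given for $X$, $X^G$, and $\rho^*\B$, with $H$ replacing $G$ and $Y$ replacing $X$. Since the author has flagged that ``the proofs are nearly identical,'' my sketch will emphasize which ingredients get reused and what minor verifications must be redone.

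First I would establish that $Y = s^{-1}(\ho) = Z \sqcup H$ is an $(L,H)$-equivalence. This is again an instance of \cite[Ex. 5.33(7)]{muhly}: the unit space $\ho \subseteq \lo$ is closed and meets every orbit of $L$ (since every orbit of $L$ intersects both $\go$ and $\ho$), and the reduction $L\vert_{\ho}$ is precisely $H$. The only real content is openness of $r, s$ restricted to $Y$, which follows from the openness of the range and source maps of $L$ together with $Y$ being open in $L$. Once $Y$ is an $(L,H)$-equivalence, the general fact from \cite[\S 2]{MRW} that a groupoid equivalence induces an isomorphism with the corresponding imprimitivity groupoid immediately yields $Y^H \cong L$ via the map $\Psi: Y^H \to L$ sending $[\gamma, \eta] \mapsto \gamma \eta^{-1}$.

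Next I would identify $\psi^*\B$ with $\B$. The map $\psi : Y/H \to \lo$ is a homeomorphism (it is simply the composition of the homeomorphism $Y/H \cong (Y^H)^{(0)}$ with the unit-space component of $\Psi$), and under $[\gamma] \mapsto r(\gamma)$ the pullback bundle $\psi^*\B$ has fibers $(\psi^*\B)_{[\gamma]} = \B_{r(\gamma)}$. This is exactly $\B$ pulled back along a homeomorphism, so the identification is immediate.

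Finally, I would check that the $Y^H$-action $\sigma$ on $\psi^*\B$ agrees with $\beta$ on $\B$ under the identifications above. By the definition of the pullback action (from just before Proposition \ref{prop:imprimitivityDS}), $\sigma_{[\gamma, \eta]}(a) = \beta_{\gamma \eta^{-1}}(a)$ for $a \in \B_{r(\eta)} \cong (\psi^*\B)_{s([\gamma,\eta])}$, with image in $\B_{r(\gamma)} \cong (\psi^*\B)_{r([\gamma,\eta])}$. Since $\Psi([\gamma,\eta]) = \gamma \eta^{-1}$, this is exactly $\beta_{\Psi([\gamma,\eta])}(a)$, so the two actions coincide. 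There is no real obstacle here; the only mild point to double-check is that when $\gamma, \eta$ range over $Y = Z \sqcup H$ (rather than over $X = G \sqcup \Zop$), the composition $\gamma \eta^{-1}$ still lands in $L$ and exhausts all of $L$, which is clear from the multiplication table of the linking groupoid.
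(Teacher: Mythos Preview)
Your proposal is correct and follows exactly the approach the paper intends: the paper gives no separate proof here, instead stating that ``the proofs are nearly identical'' to the $X$/$G$ case, and your sketch faithfully transcribes the earlier arguments (the Muhly Example 5.33(7) reduction, the \cite{MRW} isomorphism with the imprimitivity groupoid, and the fiberwise identification of the pullback bundle and action) with $H$ in place of $G$.
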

	
Putting these results together establishes an equivalence between $(\B, L, \beta)$ and its restriction 
$(\A, G, \alpha)$ to $G$ (or alternatively, its restriction to $H$).
	
\begin{cor}
	Let $(\B, L, \beta)$ be a dynamical system, and let $(\A, G, \alpha)$ (respectively, $(\mathcal{C}, H, \tau)$) denote the restriction to $G$ (respectively, $H$). Then 
	$\B \rtimes_\beta L$ is Morita equivalent to $\A \rtimes_\alpha G$ (respectively, $\mathcal{C} \rtimes_\tau H$) via a completion of the pre-imprimitivity bimodule 
	$\Gamma_c(X, s^*\B)$ (respectively, $\Gamma_c(Y, s^*\B)$). Furthermore, this Morita equivalence descends to the level of reduced crossed products, 
	and $\Gamma_c(X, s^*\B)$ also completes to a $\B \rtimes_{\beta, r} L - \A \rtimes_{\alpha, r} G$-imprimitivity bimodule (respectively, 
	$\B \rtimes_{\beta, r} L - \mathcal{C} \rtimes_{\tau, r} H$-imprimitivity bimodule).
\end{cor}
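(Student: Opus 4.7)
The plan is to apply Proposition \ref{prop:imprimitivityDS} with the roles of $G$ and $H$ replaced by $L$ and $G$. That proposition was set up for a closed subgroupoid $H$ of a groupoid $G$ carrying a dynamical system. In our situation, $G$ is a closed subgroupoid of $L$ (with Haar system) and $(\B, L, \beta)$ is our ambient dynamical system, so taking $X = s_L^{-1}(\go)$ gives us an $(L,G)$-equivalence. The preceding corollaries identify $X^G$ with $L$, identify $\rho^*\B$ with $\B$, and show that under these identifications the action $\sigma$ of $X^G$ on $\rho^*\B$ agrees with the action $\beta$ of $L$ on $\B$. Thus the dynamical system $(\rho^*\B, X^G, \sigma)$ is literally the same as $(\B, L, \beta)$, and Proposition \ref{prop:imprimitivityDS} then directly yields that $\Gamma_c(X, s^*\B)$ is a $\B \rtimes_\beta L - \A \rtimes_\alpha G$-pre-imprimitivity bimodule which completes to a full imprimitivity bimodule.

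The descent to reduced crossed products is immediate from Corollary \ref{cor:ReducedImprimitivityDS}, which says exactly that the pre-imprimitivity bimodule structure of Proposition \ref{prop:imprimitivityDS} survives passage to reduced crossed products. Applying this with $L$ in place of $G$ and $G$ in place of $H$ gives the reduced imprimitivity bimodule $\B \rtimes_{\beta, r} L - \A \rtimes_{\alpha, r} G$.

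The case of $H$ is handled identically with $Y = s_L^{-1}(\ho)$ in place of $X$: the preceding proposition asserts that $Y$ is an $(L,H)$-equivalence with $Y^H \cong L$ and $\psi^*\B$ matching $\B$ in the same way, so Proposition \ref{prop:imprimitivityDS} and Corollary \ref{cor:ReducedImprimitivityDS} apply verbatim to produce the $\B \rtimes_\beta L - \mathcal{C} \rtimes_\tau H$ and $\B \rtimes_{\beta, r} L - \mathcal{C} \rtimes_{\tau, r} H$ imprimitivity bimodules.

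The only point requiring any care is verifying that the bimodule operations from Proposition \ref{prop:imprimitivityDS}, when translated through the identifications $X^G \cong L$ and $\rho^*\B \cong \B$, coincide with the operations one would write down naively from the equivalence $(\B, L, \beta) \sim (\A, G, \alpha)$. This is a bookkeeping check following from the explicit formula $\Phi([\gamma, \eta]) = \gamma \eta^{-1}$ for the isomorphism $X^G \to L$, and is essentially what the preceding corollaries already establish. The statement therefore follows with no further work beyond invoking the cited results.
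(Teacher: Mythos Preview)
Your proposal is correct and matches the paper's approach exactly: the corollary is stated without proof in the paper, as it is meant to follow immediately by combining the identifications $X^G \cong L$, $\rho^*\B \cong \B$, and $\sigma = \beta$ established just before with Proposition~\ref{prop:imprimitivityDS} and Corollary~\ref{cor:ReducedImprimitivityDS}. Your write-up simply makes explicit what the paper leaves implicit.
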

	
In light of these results, Theorem \ref{thm:kwexact} simplifies to the following special case.

\begin{cor}
\label{cor:kwexactSpecialCase}
	Let $G$ and $H$ be locally compact Hausdorff groupoids with Haar systems, let $Z$ be a $(G, H)$-equivalence, and let $L$ denote 
	the associated linking groupoid. Suppose that $(\B, L, \beta)$ is a separable groupoid dynamical system and $J \subseteq B$ is an 
	$L$-invariant ideal.
	\begin{thmenum}
		\item If $(\A, G, \alpha)$ denotes the restriction of $(\B, L, \beta)$ to $G$ and $I = J \cap A$, then the sequence
		\[
			0 \to \mathcal{J} \rtimes_{\beta, r} L \to \B \rtimes_{\beta, r} L \to \B/\mathcal{J} \rtimes_{\beta, r} L \to 0
		\]
		is exact if and only if
		\[
			0 \to \I \rtimes_{\alpha, r} G \to \A \rtimes_{\alpha, r} G \to \A/\I \rtimes_{\alpha, r} G \to 0
		\]
		is exact.
		
		\item If $(\mathcal{C}, H, \tau)$ denotes the restriction of $(\B, L, \beta)$ to $H$ and $K = J \cap A$, then the sequence
		\[
			0 \to \mathcal{J} \rtimes_{\beta, r} L \to \B \rtimes_{\beta, r} L \to \B/\mathcal{J} \rtimes_{\beta, r} L \to 0
		\]
		is exact if and only if
		\[
			0 \to \mathcal{K} \rtimes_{\tau, r} H \to \mathcal{C} \rtimes_{\tau, r} H \to \mathcal{C}/\mathcal{K} \rtimes_{\tau, r} H \to 0
		\]
		is exact.
	\end{thmenum}
\end{cor}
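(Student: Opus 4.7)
The strategy is to deduce this corollary from Theorem \ref{thm:kwexact} by taking $L$ itself as the ambient groupoid and $G$ (respectively $H$) as the closed subgroupoid. With that substitution, the ``ambient'' crossed product sequence in Theorem \ref{thm:kwexact} involves the pullback system $(\rho^*\B, X^G, \sigma)$ and its ideal $\rho^*\mathcal{J}$, while the ``subgroupoid'' sequence involves $(\A, G, \alpha)$ and $\I = \mathcal{J}\vert_{\go}$, which is exactly the ideal corresponding to $I = J \cap A$. So the plan is first to check that the hypotheses of Theorem \ref{thm:kwexact} apply (in particular that $\mathcal{J}$, being $L$-invariant, restricts to a $G$-invariant ideal $\I$ of $\A$, and that $I = J \cap A$ matches this bundle-theoretic restriction via the identification of $A$ with the ideal of sections in $B$ vanishing on $\ho$), and then to transport the pullback sequence through the identifications established in the preceding propositions.

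Concretely, I would carry out the following steps. First, note that $G$-invariance of $\I$ is immediate from $L$-invariance of $\mathcal{J}$ since the $G$-action is just the restriction of the $L$-action. Second, invoke the isomorphism $X^G \cong L$ from Corollary \ref{cor:impxg} together with the identification $\rho^*\B \cong \B$ and the matching of the $X^G$-action $\sigma$ with the $L$-action $\beta$. These combine to give a $*$-isomorphism $\rho^*\B \rtimes_{\sigma, r} X^G \cong \B \rtimes_{\beta, r} L$, and the same identifications applied to $\mathcal{J}$ and $\B/\mathcal{J}$ yield $\rho^*\mathcal{J} \rtimes_{\sigma, r} X^G \cong \mathcal{J} \rtimes_{\beta, r} L$ and $\rho^*(\B/\mathcal{J}) \rtimes_{\sigma, r} X^G \cong \B/\mathcal{J} \rtimes_{\beta, r} L$, with the maps $\rho^*\iota \rtimes \id$ and $\rho^*q \rtimes \id$ corresponding to $\iota \rtimes \id$ and $q \rtimes \id$ under these isomorphisms (using Lemma \ref{lem:kw3.8} to know the pullback sequence is well-behaved). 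Third, plug everything into Theorem \ref{thm:kwexact}: the equivalence of exactness for the pullback sequence and the subgroupoid sequence becomes, after identification, precisely statement (a) of the corollary.

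For part (b), I would repeat this verbatim with $Y = s^{-1}(\ho)$, $Y^H \cong L$, and $\psi^*\B \cong \B$ in place of the corresponding objects for $G$. No new ingredients are needed since the preceding discussion gave the analogous $Y^H$-versions of every identification used in part (a). The ``$K = J \cap A$'' in the statement appears to be a typo for $K = J \cap C$, where $C$ denotes the section algebra of $\mathcal{C} = \B\vert_{\ho}$; the proof works for this ideal, which corresponds to the bundle restriction $\mathcal{K} = \mathcal{J}\vert_{\ho}$.

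The main obstacle is essentially bookkeeping: one must confirm that the canonical isomorphism $X^G \cong L$ intertwines the groupoid and dynamical-system data compatibly with the Haar systems, so that it induces genuine $*$-isomorphisms of the reduced crossed products (and not merely isomorphisms of the underlying convolution algebras). That compatibility, however, is already built into the corollaries immediately preceding the statement and into the discussion that identifies $\sigma$ with $\beta$, so no new analytic work is required beyond invoking these and noting the routine fact that the maps $\iota \rtimes \id$ and $q \rtimes \id$ are transported to one another under the identifications. Once that is in place, the corollary is an immediate consequence of Theorem \ref{thm:kwexact}.
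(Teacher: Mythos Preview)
Your proposal is correct and matches the paper's approach exactly: the paper presents this corollary as an immediate specialization of Theorem \ref{thm:kwexact} via the identifications $X^G \cong L$, $\rho^*\B \cong \B$, and $\sigma \leftrightarrow \beta$ established in the preceding propositions, and your write-up simply spells out that passage in detail. Your observation about the typo in part (b) (it should read $K = J \cap C$) is also apt.
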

	
The first half of this corollary tells us in particular that if $G$ is exact, then the linking groupoid $L$ is exact. We'd like to also use the 
second half of the corollary to deduce that the exactness of $L$ descends to $H$. However, to show that $H$ is exact, we must be able to 
take \emph{any} dynamical system $(\A, H, \alpha)$ and \emph{any} invariant ideal $I \subseteq A$, and show that
\[
	0 \to \I \rtimes_{\alpha, r} H \to \A \rtimes_{\alpha, r} H \to \A/\I \rtimes_{\alpha, r} H \to 0
\]
is exact. Therefore, to use Corollary \ref{cor:kwexactSpecialCase} we need to somehow create a new dynamical system $(\B, L, \beta)$ 
that restricts to $(\A, H, \alpha)$. This problem really amounts to the following: given two groupoids $G$ and $H$, a 
$(G,H)$-equivalence $Z$, and a dynamical system $(\A, H, \alpha)$, it is possible to ``induce'' an upper semicontinuous bundle 
$\A^Z \to \go$ and an action $\alpha^Z$ of $G$ on $\A^Z$? A trick for doing so originated in \cite{KMRW} for continuous
$C^*$-bundles, and it was then extended to upper semicontinuous bundles in \cite{BG} and \cite{jonbrown}. The details are worked out 
thoroughly in Section 6.4 of \cite{jonbrown}, 
%and this treatment should look similar to the one presented there.
so we summarize the general construction here.

Let $G$ and $H$ be groupoids, and suppose $Z$ is a $(G, H)$-equivalence and $(\A, H, \alpha)$ is a dynamical system. Consider the 
pullback %via the source map $s_Z : Z \to \ho$:
\[
	s_Z^*\A = \bigl\{(z, a) \in Z \times \A : s(z) = p(a) \bigr\}.
\]
%Then we claim that $H$ acts on $s_Z^*\A$ in a natural way.
%
%\begin{lem}
 	Then the bundle $s_Z^*\A$ is a (not necessarily locally compact) right $H$-space with respect to action defined by
	\begin{equation}
	\label{eq:EquivBundleAction}
		(z, a) \cdot \eta = \bigl(z \cdot \eta, \alpha_\eta^{-1}(a) \bigr).
	\end{equation}
	for $(z, a) \in s_Z^*\A$ and $\eta \in H$ \cite[Prop. 6.33]{jonbrown}.
%\end{lem}
%\begin{proof}
%	Since \eqref{eq:EquivBundleAction} is just the diagonal action on $s_Z^*\A$, it will clearly defines a continuous right $H$-action provided 
%	the map $s : s_Z^*\A \to \ho$ defined by $s(z, a) = s_Z(z)$ is open. Suppose $u_i \to u$ in $\ho$ and $s_Z(z) = u$. Since $s_Z$ is open, 
%	we can pass to a subnet, relabel, and find a net $\{z_i\} \subseteq Z$ such that $s_Z(z_i) = u_i$ for all $i$ and $z_i \to z$ by 
%	\cite[Proposition 1.15]{TFB2}. But $p : \A \to \ho$ is open as well, so given $a \in \A$ with $p(a) = u$ we can again pass to subnets, relabel, 
%	and find a net $\{a_i\} \subseteq \A$ such that $a_i \to a$ and $p(a_i) = u_i = s_Z(z_i)$ for all $i$. Thus $(z_i, a_i) \in s_Z^*\A$ for all $i$ 
%	and $(z_i, a_i) \to (z, a)$, so $s : s_Z^*\A \to \ho$ is open.
%	
%	To see that the action is free, suppose that $(z, a) \cdot \eta = (z, a)$. Then in particular, $z \cdot \eta = z$, and since the action of $Z$ 
%	on $H$ is free, we must have $\eta = s_Z(z)$. Thus $H$ acts freely on $s_Z^*\A$. Properness is also straightforward: if $(z_i, a_i) \to (z, a)$ 
%	and $(z_i, a_i) \cdot \eta_i \to (w, b)$ in $s_Z^*\A$, then $z_i \cdot \eta_i \to w$ in $Z$, so $\{\eta_i\}$ has a convergent subnet since $Z$
%	is a proper right $H$-space. Thus $H$ acts properly on $s_Z^*\A$ as well.
%\end{proof}
%
%Since $H$ acts on $s_Z^*\A$, 
We can then define $\A^Z$ to be the quotient space $s^*\A/H$. There is a continuous surjection $p^Z : \A^Z \to \go$ 
given by
\[
	p^Z([z, a]) = r_Z(z).
\]
Note that $p^Z$ is well-defined: if $[z, a]$ and $\eta \in H$, then 
\[
	p^Z([z \cdot \eta, \alpha_\eta^{-1}(a)]) = r(z \cdot \eta) = r(z).
\]
Furthermore, this map is open and makes $\A^Z$ into an upper semicontinuous $C^*$-bundle over $\go$. This is proven in 
Proposition 2.15 of \cite{KMRW} for continuous bundles, and the upper semicontinuous case is handled in Proposition 6.33 of \cite{jonbrown}.

Finally, we can define a continuous action of $G$ on the left of $\A^Z$ as follows: for $[z, a] \in \A^Z$ and $\eta \in G$, define
\[
	\alpha_\eta^Z ([z, a]) = [\eta \cdot z, a].
\]
%
%\begin{lem}
	Then the family $\alpha^Z = \{\alpha^Z_\eta\}_{\eta \in G}$ defines a continuous action of $G$ on $\A^Z$, so $(\A^Z, G, \alpha^Z)$ is a 
	groupoid dynamical system.

\begin{defn}
	The dynamical system $(\A^Z, G, \alpha^Z)$ is called the \emph{induction} \index{induction of a bundle} of $(\A, H, \alpha)$ to $G$.
\end{defn}
	
Now let $G$ and $H$ be groupoids, $Z$ a $(G, H)$-equivalence, and $L$ the linking groupoid. We can now use the induction to construct
a dynamical system $(\B, L, \beta)$ extending a given system $(\A, H, \alpha)$.
	
\begin{prop}
	Define a bundle $q : \B \to \lo$ by
	\[
		\B = \A^Z \sqcup \A, \quad q = p^Z \sqcup p.
	\]
	Then $\B$ is an upper semicontinuous $C^*$-bundle, and $L$ acts on $\B$ via $*$-isomorphisms as follows:
	\begin{thmenum}
		\item if $\eta \in G$, then $\beta_\eta : \B_{s(\eta)} \to \B_{r(\eta)}$ is given by
		\[
			\alpha_\eta^Z : \A_{s(\eta)}^Z \to \A_{r(\eta)}^Z;
		\]
		
		\item if $\gamma \in H$, $\B_\gamma : \B_{s(\gamma)} \to \B_{r(\gamma)}$ is $\alpha_\gamma : \A_{s(\gamma)} \to \A_{r(\gamma)}$;
		\item if $z \in Z$, then $\B_{s(z)} = \A_{s(z)}$ and $\B_{r(z)} = \A_{r(z)}^Z$, and we put
			\[
				\beta_z(a) = [z, a];
			\]
		\item if $\bar{z} \in \Zop$, then $\B_{s(\bar{z})} = \A_{r(\bar{z})}^Z$ and $\B_{r(\bar{z})} = \A_{s(z)}$, and 
			\[
				\beta_{\bar{z}}([z, a]) = \beta_z^{-1}([z, a]) = a.
			\]
	\end{thmenum}
	This makes $(\B, L, \beta)$ into a dynamical system. Furthermore, $\B\vert_\ho = \A$ and $\beta_\gamma = \alpha_\gamma$ for all 
	$\gamma \in H$, so this dynamical system restricts to $(\A, H, \alpha)$ on $H$.
\end{prop}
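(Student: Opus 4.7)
The plan is to verify the four axioms of a groupoid dynamical system in turn: $\B$ is an upper semicontinuous $C^*$-bundle over $\lo$, each $\beta_\ell$ is a $*$-isomorphism between the correct fibers, the family $\{\beta_\ell\}$ satisfies the cocycle condition $\beta_{\ell_1 \ell_2} = \beta_{\ell_1} \circ \beta_{\ell_2}$, and the action map $L * \B \to \B$ is continuous. The restriction statement $\B\vert_\ho = \A$ is then immediate from the definitions.

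The bundle structure is almost automatic: $\lo = \go \sqcup \ho$ is a disjoint union of clopen subsets, $\A^Z$ is a USC $C^*$-bundle over $\go$ by the construction recalled above (i.e., \cite[Prop.~6.33]{jonbrown}), and $\A$ is a USC $C^*$-bundle over $\ho$ by hypothesis, so their disjoint union carries the corresponding bundle structure. For the fiber maps, the restrictions to $G$ and $H$ are already isomorphisms—the first because $\alpha^Z$ is a groupoid action, the second by hypothesis—so the only new content lies in $\beta_z$ and $\beta_{\overline{z}}$. Well-definedness and the $*$-isomorphism property here follow from the fact that $\A^Z_{r(z)}$ is built precisely as a quotient of fibers of $s_Z^*\A$ by a free $H$-action; freeness forces each element to have a unique representative of the form $[z, a]$ for any prescribed $z$ with $r_Z(z) = u$, giving a canonical $*$-isomorphism $\A_{s(z)} \cong \A^Z_{r(z)}$, with inverse $\beta_{\overline{z}}$.

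The core of the proof is the cocycle condition, which I would verify case-by-case according to which of the four components of $L$ contains each factor. The diagonal cases are given. Among the mixed cases, the pattern is typified by $\gamma \cdot z$, where $\beta_{\gamma z}(a) = [\gamma \cdot z, a] = \alpha^Z_\gamma([z, a])$, and by $z \cdot \eta$, where $\beta_{z \eta}(a) = [z \cdot \eta, a] = [z, \alpha_\eta(a)]$ by the very relation defining $s_Z^*\A/H$. The most delicate cases are $z \cdot \overline{w}$ and $\overline{z} \cdot w$, which require unpacking the formulas $z \overline{w} = {_G [}z, w]$ and $\overline{z} w = [z, w]_H$ from the construction of $L$. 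For instance, on the first, $\alpha^Z_{{_G[}z,w]}([w, a]) = [{_G[}z, w] \cdot w, a] = [z, a]$ by the defining property of ${_G [}z, w]$, which is exactly $\beta_z \circ \beta_{\overline{w}}$ applied to $[w, a]$; a parallel computation, using the equivalence $[w, a] = [z, \alpha_{[z,w]_H}(a)]$, handles $\overline{z} \cdot w$.

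Continuity is checked on each of the four clopen pieces of $L * \B$: the $G$ and $H$ pieces are inherited from $\alpha^Z$ and $\alpha$; continuity over $Z$ comes from the fact that $(z, a) \mapsto [z, a]$ is precisely the quotient map $s_Z^*\A \to \A^Z$, which is continuous; and continuity over $\Zop$ follows because a continuous fiberwise isomorphism of USC $C^*$-bundles has continuous inverse. The main obstacle is purely combinatorial bookkeeping: tracking the overlapping conventions for multiplication and inversion in $L$, for the equivalence relation on $s_Z^*\A$, and for the pairings ${_G [}\cdot, \cdot]$ and $[\cdot, \cdot]_H$. Once those are pinned down, no case involves anything beyond direct substitution.
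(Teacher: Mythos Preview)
Your proposal is correct and follows the same case-by-case strategy as the paper's proof: verify the bundle axioms on the disjoint union, check the cocycle identity across the components of $L$, and establish continuity on each of the four clopen pieces. The only minor divergence is in the $\Zop$ continuity step, where the paper argues directly via properness of the $H$-action on $Z$ (a subnet argument lifting convergent nets from the quotient $\A^Z$), whereas you invoke the general fact that a continuous fiberwise $*$-isomorphism of USC $C^*$-bundles has continuous inverse; both are valid, the paper's being more hands-on and yours more conceptual.
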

\begin{proof}
	It is clear that $q : \B \to \lo$ defines an upper semicontinuous $C^*$-bundle. It is also immediate that $\beta_k$ is a $*$-isomorphism
	from $\B_{s(k)}$ to $\B_{r(k)}$ for all $k \in L$. Given $(k, l) \in \ltwo$, we certainly have $\beta_{kl} = \beta_k \circ \beta_l$ if $k, l \in G$
	or $k, l \in H$. Suppose that $\gamma \in G$ and $z \in Z$ with $s(\gamma) = r(z)$. Then
	\[
		\beta_{\gamma \cdot z}(a) = [\gamma \cdot z, a] = \alpha_\gamma^Z([z, a]) = \beta_\gamma([z, a]) = \beta_\gamma \bigl( \beta_z(a) \bigr).
	\]
	Similar computations work in the cases $\gamma \in G$ and $\bar{z} \in \Zop$, $\eta \in H$ and $z \in Z$, and $\eta \in H$ and $\bar{z} \in
	\Zop$. Suppose then that $z \in Z$ and $\bar{w} \in \Zop$ with $r(\bar{w}) = s(z)$. Then
	\begin{align*}
		\beta_{z \bar{w}}([w \cdot \gamma, a]) &= \alpha^Z_{[z, w]}([w \cdot \gamma, a]) \\
			&= [z \cdot \gamma, a] \\
			&= \bigl[ z, \alpha_\gamma(a) \bigr] \\
			&= \beta_z \bigl( \alpha_\gamma(a) \bigr) \\
			&= \beta_z \circ \beta_{\bar{w}} \bigl( [w, \alpha_\gamma(a)] \bigr) \\
			&= \beta_z \circ \beta_{\bar{w}} ([w \cdot \gamma, a]).
	\end{align*}
	Thus $\beta$ defines an action. To see that it is continuous, it suffices to work with $G$, $H$, $Z$, and $\Zop$ separately. We already
	know that the restrictions of $\beta$ to $G$ and $H$ are continuous, so suppose $z_i \to z$ in $Z$ and $a_i \to a$ in $\A$. Then
	\[
		\beta_{z_i}(a_i) = [z_i, a_i] \to [z, a].
	\]
	On the other hand, suppose $\bar{z}_i \to \bar{z}$ in $\Zop$ and $[z_i, a_i] \to [z, a]$ in $\A^Z$. Pass to a subnet. Then we can find
	$\eta_i \in H$ such that $(z \cdot \eta_i, \alpha_{\eta_i}^{-1}(a_i)) \to (z, a)$. Now $z_i \to z$ and $z_i \cdot \eta_i \to z$, so the properness
	of the $H$-action on $Z$ guarantees that $\eta_i$ has a convergent subnet. Moreover, this subnet must converge to $s(z)$. Pass to this
	subnet, relabel, and observe that
	\[
		(z_i, a_i) = \bigl( z \cdot \eta_i, \alpha_{\eta_i}^{-1}(a_i) \bigr) \cdot \eta_i^{-1} \to (z, a) \cdot s(z) = (z, a).
	\]
	Thus every subnet of $\{a_i\}$ has a subnet converging to $a$, so $a_i \to a$. Therefore,
	\[
		\beta_{\bar{z}_i}([z_i, a_i]) = \beta_{z_i}^{-1}([z_i, a_i]) = a_i \to a = \beta_{\bar{z}}([z, a]),
	\]
	so the restriction of $\beta$ to $\Zop$ is continuous as well.
	Finally, $(\B, L, \beta)$ restricts to $(\A, H, \alpha)$ simply by construction.
\end{proof}
	
	\begin{thm}
	\label{thm:ExactEquiv}
		Let $G$ and $H$ be equivalent groupoids. If $G$ is exact, then so is $H$.
	\end{thm}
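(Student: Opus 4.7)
The plan is to use the induction construction $(\A, H, \alpha) \mapsto (\A^Z, G, \alpha^Z)$ together with Corollary~\ref{cor:kwexactSpecialCase}, with the linking groupoid $L$ acting as a bridge between $G$ and $H$. Fix a separable dynamical system $(\A, H, \alpha)$ and an $H$-invariant ideal $I \subseteq A$. The goal is to produce an $L$-dynamical system $(\B, L, \beta)$ restricting to $(\A, H, \alpha)$, together with an $L$-invariant ideal $J \subseteq B$ whose restriction to $H$ is $I$, so that the two halves of Corollary~\ref{cor:kwexactSpecialCase} can be chained together.

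First I would apply the construction preceding Theorem~\ref{thm:ExactEquiv} to obtain $(\B, L, \beta)$ with $\B = \A^Z \sqcup \A$. Since $I$ is $H$-invariant, the pullback $s_Z^*\I$ is invariant under the $H$-action on $s_Z^*\A$ described in \eqref{eq:EquivBundleAction}, so the quotient $\I^Z = s_Z^*\I / H$ is a well-defined subbundle of $\A^Z = s_Z^*\A/H$. I would check that $\I^Z$ is a $G$-invariant ideal subbundle of $\A^Z$: it is an ideal fiberwise because each $\I_{s(z)}$ is an ideal in $\A_{s(z)}$, and it is $G$-invariant because $\alpha_\eta^Z([z,a]) = [\eta \cdot z, a]$ does not touch the $\A$-component. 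Setting $\mathcal{J} = \I^Z \sqcup \I$ gives an ideal subbundle of $\B$, and the explicit formulas for $\beta$ on $Z$ and $\Zop$ (namely $\beta_z(a) = [z,a]$ and $\beta_{\bar z}([z,a]) = a$) make it clear that $\mathcal{J}$ is also $L$-invariant. By construction, the restriction of $\mathcal{J}$ to $\ho$ is $\I$.

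Now since $G$ is exact and $(\A^Z, G, \alpha^Z)$ is a separable $G$-dynamical system with $G$-invariant ideal corresponding to $\I^Z$, the reduced sequence
\[
	0 \to \I^Z \rtimes_{\alpha^Z, r} G \to \A^Z \rtimes_{\alpha^Z, r} G \to \A^Z/\I^Z \rtimes_{\alpha^Z, r} G \to 0
\]
is exact. Applying part~(a) of Corollary~\ref{cor:kwexactSpecialCase} (to the restriction of $(\B, L, \beta)$ to $G$, whose bundle is $\A^Z$ and whose ideal intersected with $A^Z$ is precisely $I^Z$) yields exactness of
\[
	0 \to \mathcal{J} \rtimes_{\beta, r} L \to \B \rtimes_{\beta, r} L \to \B/\mathcal{J} \rtimes_{\beta, r} L \to 0.
\]
Then part~(b) of the same corollary, applied with restriction from $L$ to $H$, transfers this exactness to
\[
	0 \to \I \rtimes_{\alpha, r} H \to \A \rtimes_{\alpha, r} H \to \A/\I \rtimes_{\alpha, r} H \to 0,
\]
which is what is required for $H$ to be exact.

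The main obstacle is the verification in the second paragraph: one has to confirm that $\I^Z$ really is a well-defined $G$-invariant ideal subbundle of $\A^Z$, that $\mathcal{J} = \I^Z \sqcup \I$ assembles into an $L$-invariant ideal of $\B$, and that the pair $(\mathcal{J}, \I^Z)$ and $(\mathcal{J}, \I)$ are the matched-up ideals to which Corollary~\ref{cor:kwexactSpecialCase}(a) and~(b) apply. These checks are routine from the explicit formulas for $\beta$ but require some care at the $Z$- and $\Zop$-components, where the action interchanges fibers of $\A$ and $\A^Z$. Everything else is a bookkeeping chain of implications once the induction machinery and the two-sided linking-groupoid corollary are in place.
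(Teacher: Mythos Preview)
Your proposal is correct and follows essentially the same route as the paper: induct the $H$-system and its invariant ideal up to an $L$-system $(\B,L,\beta)$ with invariant ideal $\mathcal{J}=\I^Z\sqcup\I$, use exactness of $G$ on the restriction together with Corollary~\ref{cor:kwexactSpecialCase}(a) to get exactness of the $L$-sequence, and then descend to $H$ via Corollary~\ref{cor:kwexactSpecialCase}(b). The only cosmetic difference is that the paper packages the first step as the standalone observation ``$G$ exact $\Rightarrow$ $L$ exact'' before starting the proof, whereas you inline that implication for the particular induced system; your extra care in verifying that $\mathcal{J}$ is $L$-invariant is exactly the check the paper leaves implicit.
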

	\begin{proof}
		Let $L$ be the associated linking groupoid. We have already observed that if $G$ is exact, then $L$ is exact. Let $(\A, H, \alpha)$ 
		be a dynamical system, $I$ an $H$-invariant ideal of $A$, and let $(\B, L, \beta)$ and $(\mathcal{J}, L, \beta)$ be the inductions of 
		these systems to $L$, as described above. Then
		\[
			0 \to \mathcal{J} \rtimes_{\beta, r} L \to \B \rtimes_{\beta, r} L \to \B/\mathcal{J} \rtimes_{\beta, r} \to 0
		\]
		is exact, since $L$ is exact. But then Corollary \ref{cor:kwexactSpecialCase} implies that the sequence
		\[
			0 \to \I \rtimes_{\alpha, r} H \to \A \rtimes_{\alpha, r} H \to \A/\I \rtimes_{\alpha, r} \to 0
		\]
		is exact. Therefore, $H$ is exact.
	\end{proof}
	
\section{Application: Transitive Groupoids}
In this section we present a brief application of Theorem \ref{thm:ExactEquiv}. First recall the following well-known result
for discrete groups.

\begin{thm}[{\cite[Thm. 5.2]{kw99}}]
\label{thm:DiscreteExact}
	Let $G$ be a discrete group. Then $G$ is exact if and only if $C_r^*(G)$ is an exact $C^*$-algebra.
\end{thm}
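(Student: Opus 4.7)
The plan is to prove the two implications separately, with the easy direction essentially a matter of specialization and the hard direction relying on a Fell absorption argument.

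For the forward implication, suppose $G$ is exact as a group. Given any short exact sequence $0 \to I \to A \to A/I \to 0$ of $C^*$-algebras, equip each of $I$, $A$, $A/I$ with the trivial $G$-action; then $I$ is automatically $G$-invariant. Since $G$ is discrete and the action is trivial, one has a canonical identification $B \rtimes_{\mathrm{triv},r} G \cong B \otimes_{\min} C_r^*(G)$ for any $C^*$-algebra $B$ (the regular covariant representation $(\pi \otimes 1, 1 \otimes \lambda)$ integrates to the spatial tensor product). Exactness of the corresponding reduced crossed product sequence then translates directly into exactness of
\[
0 \to I \otimes_{\min} C_r^*(G) \to A \otimes_{\min} C_r^*(G) \to (A/I) \otimes_{\min} C_r^*(G) \to 0,
\]
which is the defining property of $C_r^*(G)$ being an exact $C^*$-algebra.

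For the reverse implication, suppose $C_r^*(G)$ is exact, and let $(A, G, \alpha)$ be any dynamical system with a $G$-invariant ideal $I$. Fix a faithful representation $\pi : A \to B(\Hil)$. The reduced crossed product $A \rtimes_{\alpha,r} G$ is realized concretely on $\Hil \otimes \ell^2(G)$ via the regular covariant pair $(\tilde\pi, 1\otimes\lambda)$. The crucial step is to invoke Fell absorption: there is a unitary $U$ on $\Hil \otimes \ell^2(G)$ (built from $\alpha$) that conjugates $\tilde\pi(a)$ to $\pi(a) \otimes 1$ while fixing $1 \otimes \lambda_s$. Under this conjugation, $A \rtimes_{\alpha,r} G$ embeds isometrically into the spatial tensor product $A \otimes_{\min} C_r^*(G)$ (at least at the level of dense subalgebras of $C_c(G,A)$-elements), and the same embedding restricts and descends to $I \rtimes_{\alpha,r} G$ and $(A/I) \rtimes_{\alpha^I,r} G$ respectively. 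Since by hypothesis the functor $\underline{\phantom{B}} \otimes_{\min} C_r^*(G)$ preserves short exact sequences, the reduced crossed product sequence inherits exactness from the corresponding spatial-tensor sequence, proving $G$ is exact.

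The main obstacle is in the reverse direction: the Fell absorption isomorphism is easy to write down on algebraic generators, but what must be verified with care is that after taking reduced completions the embedding $A \rtimes_{\alpha,r} G \hookrightarrow A \otimes_{\min} C_r^*(G)$ is both isometric and compatible with ideal quotients. Concretely, one must check that the norm on $C_c(G,A)$ coming from the regular covariant representation of $(A,G,\alpha)$ coincides with the norm coming from viewing $\sum_s a_s \delta_s$ inside $A \otimes_{\min} C_r^*(G)$, and that this identification is natural in $A$ so that the quotient $A \to A/I$ induces a surjection of the corresponding completions with the expected kernel. Discreteness of $G$ is essential here: it ensures $C_c(G,A)$ is simply the algebraic tensor product $A \odot \C[G]$, so that the two norms can be compared on a common dense $*$-algebra, and it guarantees that the Fell absorption unitary is well defined without any measure-theoretic subtleties.
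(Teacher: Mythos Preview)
First, note that the paper does not prove this theorem at all: it is quoted verbatim from Kirchberg--Wassermann \cite[Thm.~5.2]{kw99} and used only as a black box in the application of Section~5. So there is no ``paper's own proof'' to compare against; the relevant question is simply whether your sketch is sound.

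Your forward implication is correct. For the trivial action one has $B \rtimes_{\mathrm{triv},r} G \cong B \otimes_{\min} C_r^*(G)$, and specialising the definition of group exactness to trivial actions yields exactness of $C_r^*(G)$ immediately.

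Your reverse implication, however, contains a genuine error. You claim that there is a unitary $U$ on $\Hil \otimes \ell^2(G)$ with
\[
U\,\tilde\pi(a)\,U^* = \pi(a)\otimes 1 \quad\text{and}\quad U\,(1\otimes\lambda_s)\,U^* = 1\otimes\lambda_s .
\]
No such unitary exists unless $\alpha$ is trivial. Conjugating the covariance relation $(1\otimes\lambda_s)\,\tilde\pi(a)\,(1\otimes\lambda_s)^* = \tilde\pi(\alpha_s(a))$ by $U$ would give $(1\otimes\lambda_s)(\pi(a)\otimes 1)(1\otimes\lambda_s)^* = \pi(\alpha_s(a))\otimes 1$, but the left side is $\pi(a)\otimes 1$, forcing $\alpha_s(a)=a$ for all $a,s$. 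Equivalently, your claimed embedding would give $A \rtimes_{\alpha,r} G \cong A \otimes_{\min} C_r^*(G)$ for \emph{every} action $\alpha$, which is plainly false (take $A=c_0(G)$ with the translation action: the crossed product is $\mathcal K(\ell^2(G))$, which is simple, while $c_0(G)\otimes C_r^*(G)$ is not). What Fell absorption actually says is that $(\tilde\pi,\,1\otimes\lambda)$ is unitarily equivalent to $(\pi\otimes 1,\,u\otimes\lambda)$ for a covariant pair $(\pi,u)$; the second leg is $u\otimes\lambda$, not $1\otimes\lambda$, and the unitaries $u_s$ need not lie in (the multiplier algebra of) $A$, so one does \emph{not} land inside $A\otimes_{\min} C_r^*(G)$.

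The actual Kirchberg--Wassermann argument for this direction is more delicate; one route passes through the \emph{dual coaction} $\delta: A\rtimes_r G \to (A\rtimes_r G)\otimes_{\min} C_r^*(G)$, $a\lambda_s\mapsto a\lambda_s\otimes\lambda_s$, which \emph{is} a well-defined injective $*$-homomorphism, and combines it with the faithful conditional expectations $A\rtimes_r G\to A$ available in the discrete case. Another route (Ozawa) goes via amenable actions on compact spaces. Either way, the missing ingredient is a genuine $*$-homomorphism compatible with the ideal structure, not the nonexistent unitary you describe.
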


It seems plausible that such a theorem should hold more generally for \'{e}tale groupoids. Indeed, we can use Theorem \ref{thm:ExactEquiv} to prove
a particularly cute special case. Recall that a groupoid $G$ is \emph{transitive} if given $u, v \in \go$, there is a $\gamma \in G$ such that $s(\gamma) = u$ 
and $r(\gamma)=v$. Provided $G$ is second countable, \cite[Ex. 2.2]{MRW} guarantees that $G$ is equivalent to any of one of its isotropy groups. 

Let $G$ be a second countable, transitive groupoid \index{transitive groupoid} with discrete isotropy. (Note that this includes all transitive \'{e}tale groupoids.) 
Fix $u \in \go$, and let $S_u$ be the isotropy group at $u$. Assume that $C_r^*(G)$ is exact. Since $C_r^*(S_u)$ is Morita equivalent to $C_r^*(G)$,
$C_r^*(S_u)$ is also exact. Theorem \ref{thm:DiscreteExact} then implies that $S_u$ is an exact group. But Theorem \ref{thm:ExactEquiv} shows that exactness is preserved under groupoid equivalence, so $G$ is also exact. Since $C_r^*(G)$ is exact whenever $G$ is exact by \cite[Thm. 6.14]{lalonde2014}, we have proven:

\begin{thm}
\label{thm:etale}
	Let $G$ be a transitive groupoid with discrete isotropy. Then $G$ is exact if and only if $C_r^*(G)$ is exact.
\end{thm}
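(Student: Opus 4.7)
The plan is to chain together three results established earlier in the paper together with the classical equivalence of a transitive groupoid with its isotropy: Theorem \ref{thm:ExactEquiv} (equivalence preserves groupoid exactness), Theorem \ref{thm:DiscreteExact} (for discrete groups, exactness coincides with exactness of the reduced $C^*$-algebra), and \cite[Thm. 6.14]{lalonde2014} (groupoid exactness implies reduced $C^*$-algebra exactness). The key geometric input is \cite[Ex. 2.2]{MRW}: a second countable transitive groupoid $G$ is equivalent to any of its isotropy groups. Under the standing hypothesis that $G$ has discrete isotropy, each isotropy group is an honest discrete group, and all the hypotheses line up.

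First I would fix $u \in \go$ and let $S_u$ denote the isotropy group at $u$, which is a discrete group by assumption. By \cite[Ex. 2.2]{MRW}, $G$ and $S_u$ are equivalent as locally compact Hausdorff groupoids. Passing to reduced $C^*$-algebras, this equivalence yields a Morita equivalence $C_r^*(G) \sim C_r^*(S_u)$ via \cite[Thm. 4.1]{sims-williams2012}.

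For the forward implication, if $G$ is exact then $C_r^*(G)$ is exact directly by \cite[Thm. 6.14]{lalonde2014}; neither transitivity nor discreteness of isotropy is needed here. For the reverse implication, suppose $C_r^*(G)$ is exact. Since Morita equivalence of $C^*$-algebras preserves exactness (Katsura's result, as used in the introduction), $C_r^*(S_u)$ is exact. Because $S_u$ is discrete, Theorem \ref{thm:DiscreteExact} then tells us that $S_u$ is exact as a group. Finally, since $S_u$ and $G$ are equivalent, Theorem \ref{thm:ExactEquiv} transfers exactness back up to $G$.

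The hard part of this result is really the preceding Theorem \ref{thm:ExactEquiv}: once one has equivalence invariance of groupoid exactness, the present statement is essentially a bookkeeping chain. Without Theorem \ref{thm:ExactEquiv} one could still freely move between $G$ and $S_u$ at the $C^*$-level, and between the group $S_u$ and its reduced $C^*$-algebra, but there would be no mechanism to lift exactness of the isotropy group back to the full groupoid. With that theorem in hand, the only checks are that the discrete-isotropy hypothesis makes $S_u$ a genuine discrete group so that Theorem \ref{thm:DiscreteExact} applies, and that each ``equivalence preserves exactness'' step in the chain (for $C^*$-algebras via Katsura, for groupoids via Theorem \ref{thm:ExactEquiv}) is invoked in the appropriate direction.
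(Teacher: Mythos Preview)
Your proposal is correct and follows essentially the same route as the paper: fix an isotropy group $S_u$, use the equivalence $G \sim S_u$ from \cite[Ex.~2.2]{MRW} together with Morita equivalence of reduced $C^*$-algebras to pass exactness of $C_r^*(G)$ to $C_r^*(S_u)$, apply Theorem~\ref{thm:DiscreteExact} to get $S_u$ exact, and then invoke Theorem~\ref{thm:ExactEquiv} to conclude $G$ is exact; the forward direction is \cite[Thm.~6.14]{lalonde2014}. Your write-up is slightly more explicit about the Morita-equivalence step (citing \cite{sims-williams2012} and Katsura), but the argument is the same.
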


%Theorem \ref{thm:etale} presents a particularly nice special case of the extension of Theorem \ref{thm:DiscreteExact} to \'{e}tale groupoids. It should be noted that Claire 
%Anantharaman-Delaroche has proven the general version of Theorem \ref{thm:DiscreteExact} for \'{e}tale groupoids, as announced in personal correspondence with the 
%author.

%\bibliographystyle{amsplain}
%\bibliography{/Users/slalonde/Documents/Research/SharedFiles/GroupBib.bib}

\begin{thebibliography}{99}

\bibitem{ananth-renault} C. Anantharaman-Delaroche and J. Renault, \textit{Amenable Groupoids}, Monogr. Enseign. Math., 
vol. 36, L'Enseignement Math., Geneva 2000.

\bibitem{jonbrown} J. H. Brown, \textit{Proper actions of groupoids on ${C}^*$-algebras}, Ph.D. thesis, Dartmouth College, Hanover, NH, May 2009.

\bibitem{BG} {J. H. Brown and G. Goehle}, \textit{The Brauer semigroup of a groupoid and a symmetric imprimitivity theorem}, 
{Trans. Amer. Math. Soc.} \textbf{366} (2014), 1943--1972.

\bibitem{geoff} {G. Goehle}, \textit{Groupoid Crossed Products}, Ph.D. thesis, Dartmouth College, Hanover, NH, May 2009.

\bibitem{goehle2010} \underline{\phantom{\hspace{3em}}}, \textit{The Mackey machine for crossed products by regular groupoids I}, {Houston J. Math} 
\textbf{36} (2010), 567--589.

\bibitem{IonWil09} M. Ionescu and D. P. Williams, \textit{Irreducible representations of groupoid $C^*$-algebras}, Proc. Amer. Math. Soc. \textbf{137} (2009), no. 4, 1323--1332.

\bibitem{dana-marius} \underline{\phantom{\hspace{3em}}}, \textit{Remarks on the ideal structure of Fell bundle $C^*$-algebras}, Houston J. Math. \textbf{38} (2012), no. 4, 1241--1260.

\bibitem{katsura} T. Katsura, \textit{On $C^*$-algebras associated with $C^*$-correspondences}, J. Funct. Anal. \textbf{217} (2004), 366--401.

\bibitem{kw99} {E. Kirchberg and S. Wassermann}, \textit{Exact groups and continuous bundles of $C^*$-algebras}, {Math. Ann.} 
\textbf{315}(1999), no. 2, 169--203.

\bibitem{kw99-2} \underline{\phantom{\hspace{3em}}}, \textit{Permanence properties of $C^*$-exact groups}, Doc. Math. \textbf{4} (1999), 513--558.

\bibitem{KMRW} {A. Kumjian, P. S. Muhly, J. N. Renault, D. P. Williams}, \textit{The Brauer group of a locally compact groupoid}, {Amer. J. Math.} 
\textbf{120} (1998), 901--954.

\bibitem{lalonde2014} S. M. LaLonde, \textit{Nuclearity and exactness for groupoid crossed products}, arXiv:1406.1749 [math.OA], June 2014.

\bibitem{muhly} P. Muhly, \textit{Coordinates in operator algebras}, in continuous preparation.

\bibitem{muhly-williams} P. Muhly and D. P. Williams, \textit{Equivalence and disintegration theorems for Fell bundles and their $C^*$-algebras}, Dissertationes Math. 
(Rozprawy Mat.) \textbf{456} (2008), 1--57.

\bibitem{MRW} P. Muhly, J. N. Renault, and D. P. Williams, \textit{Equivalence and isomorphism for groupoid $C^*$-algebras}, J. Operator Theory \textbf{17} (1987), no. 1,
3--22.

\bibitem{mw08} {P. S. Muhly, D. P. Williams}, \textit{Renault's equivalence theorem for groupoid crossed products}, NYJM Monographs, 
vol. 3, State Univ. of New York, Univ. at Albany, Albany, NY 2008.

\bibitem{paravicini} W. Paravicini, \textit{$KK$-theory for Banach algebras and proper groupoids}, Ph.D. thesis, Westf\"{a}lische Wilhelms-Universit\"{a}t M\"{u}nster,
M\"{u}nster, 2007.

\bibitem{renault87} J. Renault, \textit{R\'{e}presentation des produits crois\'{e}s d'alg\`{e}bres de groupo\"{i}des}, J. Operator Theory \textbf{18} (1987), no. 1, 67--97.

\bibitem{sims-williams2013} {A. Sims and D. P. Williams}, \textit{An equivalence theorem for reduced Fell bundle $C^*$-algebras}, \textit{New York J. Math} 
\textbf{19} (2013), 159--178.

\bibitem{sims-williams2012} \underline{\phantom{\hspace{3em}}}, \textit{Renault's equivalence theorem for reduced groupoid $C^*$-algebras}, J. Operator Theory \textbf{68} (2012), 
no. 1, 223--239.

\bibitem{TFB2} {D. P. Williams}, \textit{Crossed Products of $C^*$-algebras}, Math. Surveys Monogr., no. 134, Amer. Math. Soc.,
Providence, RI 2007.

\end{thebibliography}

\end{document}